\documentclass[a4paper,11pt]{amsart}
\usepackage{graphicx}
\usepackage{amsmath}
\usepackage{amssymb}
\usepackage{oldgerm}
\usepackage[all]{xy}
\newcommand{\cone}{\operatorname{cone}\nolimits}
\newcommand{\colim}{\operatorname{colim}\nolimits}
\newcommand{\add}{\operatorname{add}\nolimits}
\newcommand{\per}{\operatorname{per}\nolimits}
\newcommand{\rhom}{\operatorname{RHom}\nolimits}

\newcommand{\Hom}{\operatorname{Hom}\nolimits}

\renewcommand{\mod}{\operatorname{mod}\nolimits}

\newcommand{\Ext}{\operatorname{Ext}\nolimits}

\newcommand{\F}{\operatorname{\mathbb{F}}\nolimits}
\newcommand{\Z}{\operatorname{\mathbb{Z}}\nolimits}

\newcommand{\N}{\operatorname{\mathbb{N}}\nolimits}

\newtheorem{theo}{Theorem}[section]

\newtheorem{cor}[theo]{Corollary}

\newtheorem{lemma}[theo]{Lemma}
\newtheorem{prop}[theo]{Proposition}
\newtheorem{defi}[theo]{Definition}
\newtheorem{rem}[theo]{Remark}
\newtheorem{example}[theo]{Example}

\newcommand{\cc}{\mathcal{C}}
\newcommand{\iso}{\stackrel{_\sim}{\rightarrow}}
\newcommand{\eps}{\varepsilon}

\newcommand{\ko}{\;\; ,}
\newcommand{\ten}{\otimes}

\begin{document}
\baselineskip=15pt
\title[The integral cluster category]{The integral cluster category}
\author{Bernhard Keller and Sarah Scherotzke}
\address{B.~K.~: Universit\'e Paris Diderot - Paris~7, Institut
Universitaire de France, UFR de Math\'ematiques, Institut de
Math\'ematiques de Jussieu, UMR 7586 du CNRS, Case 7012, B\^atiment
Chevaleret, 75205 Paris Cedex 13, France}
\address{S.~S.~: Universit\'e Paris Diderot - Paris~7, UFR de Math\'ematiques, Institut de
Math\'ematiques de Jussieu, UMR 7586 du CNRS, Case 7012, B\^atiment
Chevaleret, 75205 Paris Cedex 13, France}

\email{keller@math.jussieu.fr, scherotzke@math.jussieu.fr}

\keywords{Cluster category, rigid objects, quiver representation}
\subjclass[2000]{}

\begin{abstract} Integral cluster categories of acyclic quivers have recently been used
in the representation-theoretic approach to quantum cluster algebras.
We show that over a principal ideal domain, such categories behave
much better than one would expect: They can
be described as orbit categories, their indecomposable
rigid objects do not depend on the ground ring and the
mutation operation is transitive.
\end{abstract}

\maketitle

\section{introduction}

Cluster categories were introduced in
\cite{BuanMarshReinekeReitenTodorov06} for acyclic quivers and
independently in \cite{CalderoChapotonSchiffler06} for Dynkin
quivers of type $A$. They have played an important r\^ole in the
study of Fomin-Zelevinsky's cluster algebras
\cite{FominZelevinsky02}, cf.~the surveys \cite{BuanMarsh06}
\cite{Keller09b} \cite{Keller10b} \cite{Reiten06} \cite{Ringel07}.

Integral cluster categories appear naturally in the study of
quantum cluster algebras as defined and studied in
\cite{BerensteinZelevinsky05} and \cite{FockGoncharov09}. Indeed,
one would like to interpret the quantum parameter $q$ as the
cardinality of a finite field \cite{Rupel10a} and in order to
study the cluster categories over all prime fields simultaneously
\cite{Qin10}, one considers the cluster category over the ring of
integers, cf.~the appendix to \cite{Qin10}. In this paper, we
continue the study begun there: For an acyclic quiver $Q$ and a
principal ideal domain $R$, we construct the cluster category
$\cc_{RQ}$ using Amiot's method \cite{Amiot09} as a triangle
quotient of the perfect derived category of the Ginzburg dg
algebra \cite{Ginzburg06} associated with the path algebra $RQ$.
On the other hand, we define the category $\cc^{orb}_{RQ}$ as the
category of orbits of the bounded derived category of $RQ$ under
the action of the autoequivalence $\Sigma^{-2}S$, where $S$ is the
Serre functor and $\Sigma$ the suspension functor. In the case
where $R$ is a field, cluster categories were originally defined as
$\cc^{orb}_{RQ}$ in \cite{BuanMarshReinekeReitenTodorov06}
and it was shown in \cite{Amiot09} that the two
definitions are equivalent. Our first main result is the existence
of a natural equivalence for any principal ideal domain $R$
\[
\cc^{orb}_{RQ} \iso \cc_{RQ}.
\]
This shows in particular that the orbit category is triangulated.
For the case where $R$ is a field, this has been known since
\cite{Keller05}; in the general case, it is quite surprising since
the algebra $RQ$ is of global dimension $\leq 2$ and the
proof in \cite{Keller05} strongly used the fact that for
a field $\mathbb{F}$, the path algebra $\mathbb{F}Q$
is of global dimension $\leq 1$.

Our second main result states that all indecomposable
rigid objects of $\cc_{RQ}$ are either images of rigid
indecomposable $RQ$-modules or direct factors of
the image of $\Sigma RQ$. If we combine this with
Crawley-Boevey's classification \cite{CrawleyBoevey96}
of rigid indecomposable $RQ$-modules, we obtain that
the classification of the rigid indecomposable objects
of $\cc_{RQ}$ is independent of the principal ideal
domain $R$ and that iterated
mutation starting from $RQ$ reaches all indecomposable
rigid objects. Again, this is well-known in the
field case, cf.
\cite{BuanMarshReinekeReitenTodorov06}
\cite{HappelUnger05a}
\cite{Hubery10}, but quite surprising in the general case.

The paper is structured as follows:
In the second section, we recall general adjointness
relations between the derived tensor and the derived
$\Hom$-functor for dg algebras over any commutative
ring $R$. We define the (relative) Serre functor.

In the third section, we consider the derived category of an
$R$-algebra $A$ (finitely generated projective over $R$) and an
endofunctor $F$ of the derived category of $A$ which is given by
the derived tensor product with an $A$-bimodule complex $\Theta$.
The tensor dg algebra associated with $\Theta$ is a differential
graded algebra which we denote by $\Gamma$. We give sufficient
conditions for the orbit category $\mathcal{C}^{orb}$ of the
perfect derived category $\per(A)$ by $F$ to embed canonically
into the triangulated quotient category $\per(\Gamma)/D_{\per
(R)}(\Gamma)$. Here $D_{\per (R)}(\Gamma)$ denotes the derived
category of the differential graded $\Gamma$-modules whose restrictions
to $R$ are perfect complexes. The methods used in this section
generalize the approach used in \cite{Keller11b} to algebras over
arbitrary commutative rings.

In the fourth section, we consider the orbit category of the
perfect derived category $\per(RQ)$ under the action of the
auto-equivalence given by $\Sigma^{-2} S$. This functor is given
by the total derived functor associated with the $RQ$-bimodule
complex $ \Theta=\Sigma^{-2} \Hom_{R}(RQ, R) $. By a result of
\cite{Keller11b}, the differential graded tensor algebra of
$\Theta$ is isomorphic to the Ginzburg algebra $\Gamma$ associated
to the quiver $Q$ with the zero potential. So using the results of
the third section, we can embed the orbit category into the
integral cluster category $\per(\Gamma) / D_{\per (R)}(\Gamma)$.
Furthermore, we show in this section that a relative
$3$-Calabi-Yau property holds for $D(\Gamma)$.

In the fifth section, under the assumption that $R$ is a principal
ideal domain, we show that all rigid indecomposable objects in the
integral cluster category come from modules and their suspensions
and that the embedding given in section~3 is an equivalence of
categories. Hence the orbit category  $\mathcal{C}^{orb}$ is
triangulated and the integral cluster category is relative
$2$-Calabi-Yau. Using a result by Crawley-Boevey
\cite{CrawleyBoevey96} we establish a bijection between the rigid
indecomposable objects in the cluster category over a field $\F$
and those over a ring induced by the triangle functor
$?\otimes^L_R \F$.

In the last section, we show using the bijection between rigid
objects in $\cc_{RQ}$ and $\cc_{\F Q} $, that all cluster-tilting
objects are related by mutations.

\section*{Acknowledgment}
The second-named author thanks the `Fondation Sciences
Math\'ematiques de Paris' for a postdoctoral fellowship during
which this project was carried out. Both authors are grateful to
the referee for many helpful comments.

\section{Derived categories over commutative rings}

Let $R$ be a commutative ring. For an associative differential
graded $R$-algebra $A$ which is cofibrant as an $R$-module (cf.
section~2.12 of \cite{KellerYang11}), we denote by $D(A)$ the {\em
derived category} of dg $A$-modules, by $\per(A)$ the {\em perfect
derived category}, i.e. the thick subcategory of $D(A)$ generated
by $A$, and by $D_{\per (R)}(A)$ the full subcategory of $D(A)$
whose objects are the dg $A$-modules whose underlying complex of
$R$-modules is perfect. Throughout this article, we denote by
$\Sigma $ the shift functor in the derived category. If the
underlying $R$-module of $A$ is finitely generated projective over
$R$, we denote by $ S_{R}$ the (relative) Serre functor of $D(A)$
given by the total derived functor of tensoring with the
$A$-bimodule $\Hom^._{R}(A, R)$. Here, for two dg $A$-modules $L$
and $M$, we denote by $\Hom^._A(L,M)$ the dg $R$-module whose
$n$th component is the $R$-module of morphisms of graded
$A$-modules $f: L\to M$ homogeneous of degree $n$ and whose
differential sends such an $f$ to $d_M \circ f - (-1)^n f \circ
d_L$. We denote by $\rhom$ the total derived functor of $\Hom^.$.
We define $A^e$ to be the dg algebra $A \otimes_R A^{op}$. The
following well-known isomorphisms will often be used in the rest
of the article.

\begin{lemma}\label{basic}
Let $A$ and $B$ be two dg $R$-algebras which are cofibrant over $R$.

(1) Let $M \in D(B\otimes A^{op})$, $L \in D(A)$ and $N \in D(B)$.
There is a bifunctorial isomorphism
\[
\rhom_B( L\otimes_A^L M,N) \iso \rhom_A( L , \rhom_B(M,N))
\]
in $D(R)$.

(2) For $P \in \per (B)$ and $M\in D(B)$, there is a bifunctorial
isomorphism
\[
M \otimes^L_B \rhom_B(P,B) \iso \rhom_B(P,M)
\]
in $D(B^{op})$.

(3) For all $L$ and $M$ in $D(A)$, there is a bifunctorial isomorphism
\[
\rhom_{A^e} (A, \rhom_R(L,M))\iso \rhom_A(L,M)
\]
in $D(A^{op})$ .

\end{lemma}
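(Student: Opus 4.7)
The plan is that all three identities are derived-categorical versions of classical strict adjunctions, and the only real work lies in descending from the underived to the derived level. The hypothesis that $A$ and $B$ are cofibrant as dg $R$-modules is the key input: it guarantees that a cofibrant dg $A$- (or $B \otimes A^{op}$-)module is also cofibrant as a dg $R$-module, so that the various underived Hom-complexes really do compute the derived Hom. For~(1), I would start from the strict tensor--Hom isomorphism
\[
\Hom^._B(L \otimes_A M, N) \cong \Hom^._A(L, \Hom^._B(M, N))
\]
of dg $R$-modules, which is formal from the definitions. To derive it, I replace $L$ by a cofibrant resolution $pL$ over $A$ and $M$ by a cofibrant resolution $pM$ over $B \otimes A^{op}$. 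Using the cofibrancy hypotheses, $pL \otimes_A pM$ represents $L \otimes_A^L M$ in $D(B)$, while $\Hom^._B(pM, N)$ represents $\rhom_B(M, N)$ as a dg $A$-module; substitution in the strict adjunction then yields~(1).

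For~(2), I would exhibit the natural morphism
\[
M \otimes^L_B \rhom_B(P, B) \longrightarrow \rhom_B(P, M),
\]
constructed by composing with the evaluation pairing $P \otimes_B^L \rhom_B(P, B) \to B$. When $P = B$, both sides reduce to $M$ and the map is the identity. Since both sides, viewed as functors in $P$, are triangulated and commute with direct summands, the full subcategory of $D(B)$ on which the map is an isomorphism is a thick subcategory. It contains $B$, hence contains all of $\per(B)$, proving~(2).

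For~(3), the strict identification
\[
\Hom_{A^e}(A, \Hom^._R(L, M)) \cong \Hom^._A(L, M),
\]
given by $\phi \mapsto \phi(1)$, is immediate: the $A$-bilinearity of $\phi$ is exactly the $A$-linearity of $\phi(1) : L \to M$. To pass to the derived level, I would choose a cofibrant resolution $pL \to L$ over $A$. Because $A$ is cofibrant over $R$, the module $pL$ is automatically cofibrant over $R$, so $\Hom^._R(pL, M)$ represents $\rhom_R(L, M)$; substituting $pL$ for $L$ in the strict isomorphism above gives the derived statement in $D(A^{op})$. The one point that requires attention is to verify that $\Hom^._R(pL, M)$, with its natural $A^e$-module structure coming from the $A$-action on $pL$ and on $M$, is a model for $\rhom_R(L, M)$ in $D(A^e)$ and not merely in $D(R)$. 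This is the main (but modest) technical obstacle, and is precisely the reason for imposing cofibrancy over $R$ in the hypothesis.
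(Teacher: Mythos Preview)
Your arguments for (1) and (2) are correct and are exactly the paper's: the strict tensor--Hom adjunction plus cofibrant replacements for (1), and the natural comparison map plus a thick-subcategory argument starting from $P=B$ for (2).

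For (3) there is a genuine gap. You resolve $L$ cofibrantly over $A$ and then invoke the strict identity
\[
\Hom^._{A^e}\bigl(A,\Hom^._R(pL,M)\bigr)\;\cong\;\Hom^._A(pL,M).
\]
The right-hand side is indeed $\rhom_A(L,M)$, but the left-hand side is \emph{not} a model for $\rhom_{A^e}(A,\rhom_R(L,M))$: the diagonal bimodule $A$ is almost never cofibrant over $A^e$, so applying $\Hom^._{A^e}(A,-)$ to a complex with the correct homotopy type does not compute the derived Hom. The subtlety you flag---whether $\Hom^._R(pL,M)$ represents $\rhom_R(L,M)$ in $D(A^e)$---is actually harmless (quasi-isomorphisms over $A^e$ are detected over $R$, and you already have $pL$ cofibrant over $R$); the real obstruction is on the other variable of $\rhom_{A^e}$.

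The paper repairs this by taking a cofibrant resolution $pA\to A$ over $A^e$ and a fibrant replacement $iM$, and then applying the already proven adjunction (1) to rewrite $\Hom^._{A^e}(pA,\Hom^._R(pL,iM))$ as $\Hom^._A(pA\otimes_A pL,iM)$; since $pA$ is cofibrant over $A$ and over $A^{op}$ (here the hypothesis that $A$ is cofibrant over $R$ is used again), the map $pA\otimes_A pL\to pL$ is a quasi-isomorphism between cofibrant $A$-modules and one concludes. An alternative salvage of your approach would be to also take a K-injective resolution $iM$ of $M$ and check that $\Hom^._R(pL,iM)$ is K-injective over $A^e$---but verifying that K-injectivity is essentially the same adjunction computation, so one does not avoid the work.
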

\begin{proof}
We denote by $pM$ a cofibrant replacement of $M$ and by $iM$ a
fibrant replacement of $M$. We have $L \otimes^L_A M \cong pL
\otimes_A p M$. Therefore, we have
\begin{align*}
 \rhom_B(L \otimes_A^L M, N) &=\Hom_B^.(pL \otimes_A pM, N)\\
 &\cong \Hom^._A(pL, \Hom^._B(pM, N))\\
 &=\rhom_A( L , \rhom_B(M,N)).
\end{align*}
This proves (1).

For part (2), we observe that we have a bifunctorial morphism from
$pM\otimes_B\Hom^._B(pP,B)$ to $\Hom^._B(pP,pM)$, which is
invertible in $D(R)$ for $P=A$ hence for all objects in $\per (A)$.

For part (3), we have
$$
\rhom_{A^e} (A, \rhom_R(L,M))=\Hom^._{A^e}(pA, \Hom^._R(pL, iM))\ko
$$
where $pA$ is a cofibrant as a dg $A^e$-module. Then $pA$ is also
cofibrant as a dg $A$-module and as a dg $A^{op}$-module. We have
$\Hom^._{A^e}(pA, \Hom^._R(pL, iM)
)\subset \Hom^._{A^{op}}(pA, \Hom^._R(pL, iM))$ and by $(1)$, there
is a bifunctorial isomorphism between $\Hom^._{A^{op}}(pA,
\Hom^._R(pL, iM))$ and $\Hom^._R(pA \otimes_{A^{op}} pL, iM)$.
This isomorphism induces a bijection between $\Hom^._{A^e}(pA,
\Hom^._R(pL, iM) )$, which consists of all the elements of
$\Hom^._{A^{op}}(pA, \Hom^._R(pL, iM)) $ that commute with the right
action of $A$, and $\Hom^._A( pA\otimes_{A^{op}} pL, iM)$. Now
$\Hom^._A( pA\otimes_A pL, iM)$ is isomorphic to $\rhom_A( L, M) $,
which finishes the proof.
\end{proof}

\begin{prop}\label{calabi} Suppose that the underlying $R$-module of $A$
is finitely generated projective.
For $L \in D(A)$ and $M \in \per (A)$, we have the
following canonical bifunctorial isomorphism
\[
\rhom_R(\rhom_{A}(M, S_{R} L),R) \iso \rhom_{A}(L,M).
\]
If $\Hom_R(A,R)$ belongs to $\per (A)$, then $S_R$ is an
auto-equivalence of $\per (A)$ with inverse $\rhom_A( \rhom(?,R), A)$.
\end{prop}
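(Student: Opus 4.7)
The plan is to reduce everything to the adjointness and Hom-tensor identities of Lemma~\ref{basic} together with biduality over $R$. First I would establish the intermediate identification
\[
\rhom_A(M, S_R L) \iso L \otimes_A^L \rhom_R(M,R)
\]
in $D(R)$. To obtain this, I expand $S_R L = L \otimes_A^L \Hom^._R(A,R)$ and apply part~(2) of Lemma~\ref{basic} with $P = M \in \per(A)$ to move $\rhom_A(M,-)$ across the outer tensor, which yields
\[
\rhom_A(M, S_R L) \cong L \otimes_A^L \bigl(\Hom^._R(A,R) \otimes_A^L \rhom_A(M,A)\bigr).
\]
Reading part~(2) of the lemma in the reverse direction collapses the inner tensor into $\rhom_A(M, \Hom^._R(A,R))$, and then part~(1) together with $M \otimes_A^L A \cong M$ gives
\[
\rhom_A(M, \rhom_R(A,R)) \cong \rhom_R(M \otimes_A^L A, R) \cong \rhom_R(M,R).
\]

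Next I would apply $\rhom_R(-,R)$ to this intermediate isomorphism and use part~(1) of Lemma~\ref{basic} once more, regarding $\rhom_R(M,R)$ as an object of $D(R \otimes A^{op})$, to arrive at
\[
\rhom_R(\rhom_A(M, S_R L), R) \cong \rhom_A\bigl(L, \rhom_R(\rhom_R(M,R), R)\bigr).
\]
Since $A$ is finitely generated projective over $R$, every object of $\per(A)$ is perfect as a complex of $R$-modules, so the canonical biduality map $M \to \rhom_R(\rhom_R(M,R), R)$ is an isomorphism in $D(A)$; substituting this completes the proof of the first assertion.

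For the second assertion, the hypothesis $\Hom_R(A,R) \in \per(A)$ immediately ensures that $S_R$ sends $A$, and hence all of $\per(A)$, into $\per(A)$. To identify the inverse as $G := \rhom_A(\rhom_R(?,R), A)$, I would run a parallel adjunction chain ending in $A$ rather than $R$: for $L \in \per(A)$, part~(1) of Lemma~\ref{basic} combined with biduality for the finitely generated projective $R$-module $A$ yields $\rhom_R(S_R L, R) \cong \rhom_A(L, A)$, after which reflexivity of a perfect object over $A$ gives
\[
G(S_R L) = \rhom_A(\rhom_A(L, A), A) \cong L.
\]
The opposite composition $S_R \circ G \cong \id$ is obtained by a symmetric computation.

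I expect the main delicacy to lie in bookkeeping the left- and right-$A$-actions on the intermediate complexes, so that each invocation of Lemma~\ref{basic} takes place in the appropriate derived category of bimodules; once these module structures are tracked correctly, every step is a formal manipulation and no deeper obstruction arises.
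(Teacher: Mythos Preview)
Your argument is correct and follows essentially the same path as the paper's proof: two applications of part~(2) of Lemma~\ref{basic}, then part~(1), then $R$-dualization via part~(1) again and biduality over~$R$. For the second assertion the paper obtains your key identity $\rhom_R(S_R L,R)\cong\rhom_A(L,A)$ more economically by specializing the already-proved first isomorphism to $M=A$, rather than re-deriving it from the Hom--tensor adjunction, and then concludes by observing that $\rhom_R(?,R)$ and $\rhom_A(?,A)$ are dualities on $\per(A)$.
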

\begin{proof} By applying part (2) of \ref{basic} twice
we obtain
$$\rhom_A(M, L\otimes^L_A \rhom_R(A,R)) \cong L \otimes^L_A
\rhom_A(M, \rhom_R(A,R)).$$ By part (1) of \ref{basic}, we obtain
$$\rhom_A(M, \rhom_R(A,R))  \cong \rhom_R(M, R). $$
Therefore, we have
\begin{align*}
\rhom_R(\rhom_A(M, &L \otimes^L_A \rhom_R(A,R)) ),R) \\ &\cong \rhom_R(L \otimes^L_A \rhom_R(M,R),R) \\
&\cong \rhom_A(L,\rhom_R(\rhom_R(M,R),R)) \\ &\cong \rhom_A(L,M)
\end{align*} by (1) of \ref{basic}. This proves the first part.

If we choose $M=A$, we get, by the first statement, $$S_R L \cong
\rhom_{R}(\rhom_{A}(L,A), R) .$$ Now we use the fact that
$\rhom_R(?,R)$ and $\rhom_A(?,A)$ are duality functors on $\per (A)$.

\end{proof}

\section{Embeddings of orbit categories}

Let $A$ be an associative $R$-algebra which is finitely generated
projective as an $R$-module and let $\Theta$ be a complex of
$A$-$A$-bimodules. We suppose that $\Theta$ is cofibrant as a dg
$A$-bimodule and that $\Theta$ is perfect as a dg $R$-module. We
denote by $F:D(A) \to D(A)$ the functor $?\otimes_A^L \Theta$. We
define the dg algebra $ \Gamma=T_A(\Theta)$ to be the tensor algebra
over $A$ given by
\[
A\oplus \Theta \oplus (\Theta\otimes_A \Theta)
\oplus \cdots \oplus (\Theta\otimes_A \cdots \otimes_A \Theta)
\oplus \cdots \quad .
\]
Then $ \Gamma$ is homologically smooth over $R$ by
\cite[3.7]{Keller11b}. For $N \ge 0$, we denote by $ \Gamma^{>N}$
the ideal $\bigoplus_{n>N} \Theta^{\otimes_A n}$ of $ \Gamma$ and
put $ \Gamma^{\le N} = \Gamma/ \Gamma^{> N}$.  Then $D_{\per (R)}(
\Gamma)$ is contained in $\per ( \Gamma)$ and $ \Gamma^{\le N }$
lies in $D_{\per (R)}( \Gamma)$ for all $N \in \N$. We consider the
category $\mathcal{C}( \Gamma)= \per( \Gamma)/D_{\per (R)}( \Gamma)$
and compute its morphism spaces. We have a functor $?\otimes^L_A
\Gamma: \per (A) \to \mathcal{C}( \Gamma)$ and a restriction functor
$\per( \Gamma) \to D(A)$ induced by the natural embedding of $A$
into $ \Gamma$. For any $Y \in \per( \Gamma)$ and any $N\in \N$, let
$F^N(Y)=Y \ten_A \Theta^{\ten_A N}$ and let $m_N: F^N(Y) \to Y$ be
induced by the multiplication.

We assume that for any $X \in D_{\per (R)}(A) $ there is an $n \in \N $ such that $\Hom_{D(A)}(F^n(A),X)$ vanishes.

\begin{lemma} For $Y$ in $\per( \Gamma)$, we have the following isomorphisms
\begin{align*}
\colim_N \Hom_{D( \Gamma)}( \Gamma^{>N}, Y) &\cong
\Hom_{\mathcal{C}( \Gamma)}( \Gamma, Y) \\ &\cong \colim_N  \Hom_{D(A)}
(F^{N+1}(A), Y|_A).
\end{align*}
\end{lemma}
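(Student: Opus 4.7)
The plan is to establish the second isomorphism by extension/restriction of scalars along $A \hookrightarrow \Gamma$, and the first by a cofinality argument in the Verdier quotient $\cc(\Gamma) = \per(\Gamma)/D_{\per(R)}(\Gamma)$. The key observation underlying both is that, as a dg right $\Gamma$-module,
\[
\Gamma^{>N} \;=\; \bigoplus_{n>N} \Theta^{\otimes_A n} \;\cong\; \Theta^{\otimes_A (N+1)} \otimes_A \Gamma \;=\; F^{N+1}(A) \otimes_A \Gamma,
\]
where I use that $F^{N+1}(A) = \Theta^{\otimes_A (N+1)}$ since $\Theta$ is cofibrant over $A$ on each side.

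With this identification, the tensor-Hom adjunction for $A \to \Gamma$ gives, for each $N$, a natural isomorphism
\[
\Hom_{D(\Gamma)}(\Gamma^{>N}, Y) \;\cong\; \Hom_{D(A)}(F^{N+1}(A), Y|_A),
\]
compatible with the connecting maps $m_N$ as $N$ varies, so passing to the filtered colimit yields the second of the two claimed isomorphisms. For the first isomorphism I would use the truncation triangle $\Gamma^{>N} \to \Gamma \to \Gamma^{\leq N} \to \Sigma \Gamma^{>N}$, which exhibits each $\Gamma^{>N} \to \Gamma$ as a map becoming invertible in $\cc(\Gamma)$ (its cone $\Gamma^{\leq N}$ lies in $D_{\per(R)}(\Gamma)$), producing a canonical map from $\colim_N \Hom_{D(\Gamma)}(\Gamma^{>N}, Y)$ to $\Hom_{\cc(\Gamma)}(\Gamma, Y)$.

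By the standard calculus-of-fractions description of morphism spaces in $\cc(\Gamma)$, proving this map bijective amounts to showing that the tower $\{\Gamma^{>N} \to \Gamma\}_N$ is cofinal among all $s \colon X' \to \Gamma$ in $\per(\Gamma)$ with cone $C \in D_{\per(R)}(\Gamma)$. Applying $\Hom_{D(\Gamma)}(\Gamma^{>N}, -)$ to the triangle $X' \to \Gamma \to C \to \Sigma X'$, such a factorization $\Gamma^{>N} \to X' \to \Gamma$ exists once $\Hom_{D(\Gamma)}(\Gamma^{>N}, C) = 0$; by the adjunction this group equals $\Hom_{D(A)}(F^{N+1}(A), C|_A)$, and since restriction to $A$ preserves the underlying complex over $R$ we have $C|_A \in D_{\per(R)}(A)$, so the standing hypothesis on $F$ produces such an $N$. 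An analogous argument, combined with the fact that any two roofs can be refined by a common $\Gamma^{>N'}$ whose inclusion into $\Gamma^{>N}$ has cone a finite sum of modules $\Theta^{\otimes_A k}$ (and therefore sits in $D_{\per(R)}(\Gamma)$, using that $\Theta$ is perfect over $R$ and $A$ is finitely generated projective over $R$), yields the injectivity of the map on colimits. I expect the main obstacle to be the bookkeeping needed to verify that all intermediate cones remain in $D_{\per(R)}(\Gamma)$, so that each refinement is genuinely a roof; the remaining steps are formal consequences of the adjunction and the assumption, which has been tailored to yield precisely this cofinality.
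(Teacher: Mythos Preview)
Your proposal is correct and follows essentially the same approach as the paper: the same identification $\Gamma^{>N}\cong F^{N+1}(A)\otimes_A\Gamma$, the same tensor--Hom adjunction yielding $\Hom_{D(\Gamma)}(\Gamma^{>N},Y)\cong\Hom_{D(A)}(F^{N+1}(A),Y|_A)$, and the same cofinality argument via the truncation triangle and the standing hypothesis. The only cosmetic difference is that the paper invokes a classical result of Verdier to handle cofinality in one stroke, whereas you unpack surjectivity and injectivity separately; the bookkeeping you anticipate (that the cone of $\Gamma^{>N'}\hookrightarrow\Gamma^{>N}$ lies in $D_{\per(R)}(\Gamma)$) is immediate from the hypotheses on $A$ and $\Theta$.
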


\begin{proof}
By definition, the space $\Hom_{\mathcal{C}( \Gamma)}( \Gamma, Y)$
is given by
\[
\colim_{M_{\Gamma}}  \Hom_{D( \Gamma)}(\Gamma',Y)\ko
\]
where $M_{\Gamma}$ denotes the category of all morphisms $s: \Gamma'
\to \Gamma$ in $D(\Gamma)$ such that $\cone(s) $ lies in $D_{\per
R}(\Gamma)$. We consider the exact sequence
\[
\xymatrix{0 \ar[r] & \Gamma^{>N} \ar[r]^{e_N} & \Gamma  \ar[r] &  \Gamma^{\le N} \ar[r] &  0.}
\]
As
$\Gamma^{\le N} $ vanishes in $\mathcal{C}(\Gamma)$, the embedding
$e_N$ is an isomorphism for any $N$ in $\N$.
The transition maps in the direct system are induced by
the embedding of $\Gamma^{>N+1}$ into $\Gamma^{>N}$ and the maps from
$\Hom_{D(\Gamma)}(\Gamma^{>N}, Y)$ to $\Hom_{\mathcal{C}(\Gamma)}(\Gamma, Y)$ by
composing with the inverse of $e_N$.

By a classical result of Verdier, it is sufficient to show that for
every morphism $s$ in $M_ \Gamma$, there is an $N\in \N$ such that
$e_N$ factors through $s$. It is therefore sufficient to show that
for every $Y \in D_{\per (R)}( \Gamma)$ there is an $N\in \N$ such
that the space $\Hom_{D( \Gamma)}( \Gamma^{>N} , Y)$ vanishes. We
have $ \Gamma^{>N} =\Theta^{\otimes_A (N+1)}\otimes_A  \Gamma$ and
by adjunction
\begin{align*}
\Hom_{D( \Gamma)}(\Theta^{\otimes_A (N+1)}\otimes_A  \Gamma, Y) &\cong
\Hom_{D(A)} (\Theta^{\otimes_A (N+1)}, \rhom_{ \Gamma}( \Gamma, Y))\\
&\cong
\Hom_{D(A)} (F^{N+1}(A), Y|_A ).
\end{align*}
The transition maps of the direct system $\colim_N \Hom_{D(A)} (F^{N+1}(A), Y|_A )$
are given by applying $F$ and composing with the multiplication map
$m_1:Y\otimes^L_A \Theta \to Y$. If $Y \in D_{\per (R)}( \Gamma)$,
then $Y|_A \in D_{\per (R)}(A)$ and by the assumption there is an
$N\in \N $ such that $\Hom_{D(A)}(F^N(A),Y|_A ) $ vanishes. By the
above isomorphism, any map from $\Gamma^{>N}$ to $Y$ vanishes.
Therefore, the colimit
\[
\colim_N \Hom_{D( \Gamma)}( \Gamma^{>N},Y)
\cong \colim_N \Hom_{D(A)} (F^{N+1}(A), Y|_A)
\]
computes $\Hom_{\mathcal{C}( \Gamma)}( \Gamma,Y)$.
\end{proof}
\begin{defi}
Let $F: \mathcal{C} \to \mathcal{C}$ be an endofunctor of an additive
category $\mathcal{C}$. The {\em orbit category} $\mathcal{C} /
\langle F \rangle $ of $\mathcal{C}$ by $F$ is the category with
the same objects as $\mathcal{C} $ and the spaces of morphisms
\[
\Hom_{\mathcal{C} / \langle F \rangle }(M,N)=\colim_{l\in \N}
\bigoplus_{i\in \N} \Hom_{\mathcal{C}} ( F^i(M), F^l(N)).
\]
\end{defi}

\begin{theo}\label{embedding} Let $Y=Y_0 \otimes^L_A  \Gamma$ for
some object $Y_0$ of $\per(A)$ and suppose that $\Theta$ belongs to
$\per (A)$. Then we have
\[
\Hom_{\mathcal{C}( \Gamma)}( \Gamma, Y)\cong
\colim_N\bigoplus_{l\in \N} \Hom_{D^b(A)} (F^N(A), F^l (Y_0))
\] and $?\otimes^L_A  \Gamma$
induces a fully faithful embedding of the orbit category
$\mathcal{C}^{orb} $ of $\per (A)$ by $F$ into $\mathcal{C}( \Gamma)$.
Furthermore $\mathcal{C}( \Gamma)$ equals its smallest thick
subcategory containing the orbit category.
\end{theo}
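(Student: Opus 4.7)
My plan is to (i) compute $\Hom_{\mathcal{C}(\Gamma)}(\Gamma, Y)$ via the preceding lemma, (ii) check that $?\otimes^L_A \Gamma$ descends to $\mathcal{C}^{orb}$, (iii) upgrade (i) to arbitrary source in $\per(A)$ by dévissage, and (iv) deduce thick generation.

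For (i), applying the preceding lemma to $Y=Y_0\otimes^L_A \Gamma$ yields
\[
\Hom_{\mathcal{C}(\Gamma)}(\Gamma, Y)\cong \colim_N \Hom_{D(A)}(F^{N+1}(A),\, Y|_A).
\]
Since $\Gamma=\bigoplus_{l\ge 0}\Theta^{\otimes_A l}$ as a left dg $A$-module, the restriction decomposes as $Y|_A\cong \bigoplus_{l\ge 0}F^l(Y_0)$. The hypothesis $\Theta\in\per(A)$ makes $F^{N+1}(A)=\Theta^{\otimes_A (N+1)}$ compact in $D(A)$, so $\Hom$ out of it commutes with coproducts; after reindexing the colimit this yields the displayed formula.

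For (ii), I would tensor the short exact sequence $0\to \Gamma^{>0}\to \Gamma\to A\to 0$ of $A$-$\Gamma$-bimodules with $X\in\per(A)$ over $A$ to produce a triangle $F(X)\otimes^L_A \Gamma \to X\otimes^L_A \Gamma\to X\to $ in $D(\Gamma)$. Since $X$ is perfect over $R$, it lies in $D_{\per (R)}(\Gamma)$ and vanishes in $\mathcal{C}(\Gamma)$, so there $F(X)\otimes^L_A \Gamma\cong X\otimes^L_A \Gamma$ canonically, and the functor passes to the orbit category. For (iii), I fix $Y=Y_0\otimes^L_A \Gamma$ and observe that both $M\mapsto \Hom_{\mathcal{C}(\Gamma)}(M\otimes^L_A \Gamma,\, Y)$ and $M\mapsto \colim_N\bigoplus_l \Hom_{D(A)}(F^N(M),\, F^l(Y_0))$ are cohomological functors on $\per(A)$ (the latter because filtered colimits and coproducts preserve long exact sequences). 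There is a natural comparison map between them, which is an isomorphism for $M=A$ by (i); dévissage along the triangles generating $\per(A)$ from $A$ extends the isomorphism to all $M\in \per(A)$. The resulting identification is precisely the orbit-category Hom formula, so $?\otimes^L_A \Gamma$ induces a fully faithful functor $\mathcal{C}^{orb}\to \mathcal{C}(\Gamma)$.

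Finally, $\per(\Gamma)$ is generated as a thick subcategory by $\Gamma=A\otimes^L_A \Gamma$, and this property descends to the Verdier quotient $\mathcal{C}(\Gamma)$; since $\Gamma$ lies in the essential image of $?\otimes^L_A \Gamma$, the thick closure of this image equals $\mathcal{C}(\Gamma)$. I expect the main obstacle to be step (iii): one must check that the natural comparison map is compatible both with the colimit transitions induced by the multiplication $Y\otimes^L_A \Theta\to Y$ and with the decomposition $Y|_A\cong \bigoplus_l F^l(Y_0)$, so that the dévissage actually produces the orbit-category formula and not merely an abstract isomorphism of abelian groups.
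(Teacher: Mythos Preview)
Your proposal is correct and follows essentially the same route as the paper: apply the preceding lemma, decompose $Y|_A\cong\bigoplus_l F^l(Y_0)$, use compactness of $F^{N+1}(A)$ to pull the sum out, then extend from $M=A$ to all of $\per(A)$ by d\'evissage using that both sides are homological functors, and finally observe that $\Gamma$ thickly generates $\per(\Gamma)$ and hence its image thickly generates the quotient. Your step~(ii), producing the natural isomorphism $F(X)\otimes^L_A\Gamma\cong X\otimes^L_A\Gamma$ in $\mathcal{C}(\Gamma)$ from the triangle associated with $0\to\Gamma^{>0}\to\Gamma\to A\to 0$, is a useful explicit justification that the functor actually descends to $\mathcal{C}^{orb}$; the paper leaves this implicit and simply identifies the computed $\Hom$ with the orbit-category formula. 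The compatibility you flag in~(iii) is indeed routine once one notes that the transition maps on both sides come from the same multiplication $\Theta\otimes_A\Gamma\to\Gamma$.
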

\begin{proof}
By the previous lemma, we have $$\Hom_{\mathcal{C}( \Gamma)}(
\Gamma, Y)\cong \colim_N  \Hom_{D(A)} (F^{N+1}(A), Y|_A).$$ But
$$Y|_A =(Y_0\otimes^L_A  \Gamma)|_A \cong \bigoplus_{l \in \N }Y_0
\otimes^L_A \Theta^{\otimes_A l}\cong \bigoplus_{l \in \N}
F^l(Y_0).$$ This proves the first statement because $F^{N+1}(A)$ is
perfect in $D(A)$. Using the fact that $\Hom_{\mathcal{C}(
\Gamma)}(?\otimes^L_A  \Gamma, Y)$ and
\[
\colim_N\bigoplus_{l\in \N} \Hom_{D(A)} (F^N(?), F^l(Y_0))
\]
are homological functors on $\per (A)$, we obtain that
$$\Hom_{\mathcal{C}( \Gamma)}(L\otimes^L_A  \Gamma, Y)\cong \colim_N
\bigoplus_{l\in \N} \Hom_{D(A)} (F^N(L), F^l (Y_0))$$ for all $L\in
\per (A)$. Since we have $\colim_N\bigoplus_{l\in \N} \Hom_{D(A)}
(F^N(L), F^l (Y_0))=\Hom_{\mathcal{C}^{orb}}(L, Y_0)$, the functor
$?\otimes_A^L  \Gamma$ induces a fully faithful embedding of the
orbit category into $\mathcal{C}( \Gamma)$. The functor
$?\otimes^L_A  \Gamma$ induces a triangle functor from $\per (A)$ to
$\per ( \Gamma)$ such that $A$ maps to $ \Gamma$. The triangle
closure of the image of $\per (A)$ is therefore $\per  (\Gamma)$. The
last statement now follows from the commutativity of the following
diagram
\[
\xymatrix{ \per (A) \ar[r]^{?\otimes^L_A  \Gamma} \ar[d] & \per (\Gamma
)\ar[d]
\\ \mathcal{C}^{orb}  \ar[r]_{? \otimes^L_A  \Gamma} & \mathcal{C}( \Gamma) .}
\]
\end{proof}

\begin{rem} \label{c-orbit-morphisms}
Note that if $F$ is an equivalence, then the colimit in \ref{embedding} is given by $$ \bigoplus_{l\in \Z}
\Hom_{D(A)} (A, F^l (Y_0)).$$
\end{rem}

\smallskip

Suppose that $F=? \otimes^L_A \Theta$ sends $\per (A)$ to itself. Let
$\F$ be a field  and $ \pi: R \to \F$ a ring homomorphism. We denote
by $\F A$ the scalar extension $A\ten_R \F$, by $\mathcal{C}(\F \!
\Gamma)$ the category $\per  (\F \! \Gamma) / D^b(\F \! \Gamma) $, by
$\F \! F$ the functor $? \otimes^L_{\F \! A} (\F \otimes^L \Theta)$
on $ \per (\F \! A )$ and by $\mathcal{C}_{\F }^{orb}$ the orbit
category of $\per (\F \! A)$ by $\F \! F$.

\begin{cor}\label{scalar extension}
The following diagram commutes:
\[
\xymatrix{ \per (A) \ar[r] \ar[d]_{? \otimes^L_R \F} &
\mathcal{C}^{orb} \ar[d] \ar[r]^{?\otimes_A^L  \Gamma} & \mathcal{C}
( \Gamma) \ar[d] & \per ( \Gamma) \ar[l] \ar[d]^{ ?\otimes^L_R \F}
\\ \per( \F \! A)  \ar[r] & \mathcal{C}_{\F}^{orb}
\ar[r]_-{?\otimes_{\F \! A} ^L \F \! \Gamma} & \mathcal{C}(\F \!
\Gamma) & \per (\F \! \Gamma) \ar[l]  }\]
\end{cor}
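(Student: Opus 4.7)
The plan is to verify commutativity of each of the three squares separately, after first checking that every vertical functor in the diagram is well-defined. Write $(-)_{\F}$ for $?\otimes^L_R \F$ throughout. Since $A$ is finitely generated projective over $R$ and $\Theta$ is cofibrant over $R$ (it is cofibrant over $A^e$, hence over $R$), the canonical maps $A_{\F} \to \F A$ and $\Theta_{\F} \to \F \otimes^L_R \Theta$ are quasi-isomorphisms. Combined with the associativity isomorphism $(M \otimes^L_A N)_{\F} \cong M_{\F} \otimes^L_{\F \! A} N_{\F}$ and induction on the tensor length, this yields a canonical quasi-isomorphism of dg algebras $\Gamma_{\F} \cong \F\Gamma = T_{\F \! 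A}(\F\Theta)$. It follows that $(-)_{\F}$ restricts to triangle functors $\per(A) \to \per(\F \! A)$ and $\per(\Gamma) \to \per(\F \Gamma)$, and that there is a natural isomorphism $F(M)_{\F} \cong \F F(M_{\F})$ for $M \in \per(A)$.

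Next, if $M \in D_{\per(R)}(\Gamma)$ then the underlying complex $M|_R$ is perfect over $R$, so the restriction of $M_{\F}$ to $\F$ equals $M|_R \otimes^L_R \F$ and is perfect over the field $\F$; hence $M_{\F}$ lies in $D_{\per(\F)}(\F \Gamma) = D^b(\F \Gamma)$. Therefore $(-)_{\F}$ descends to a triangle functor $\mathcal{C}(\Gamma) \to \mathcal{C}(\F \Gamma)$. The same compatibility $F^i(M)_{\F} \cong (\F F)^i(M_{\F})$, applied summand by summand in the colimit defining morphisms of the orbit categories, induces the vertical functor $\mathcal{C}^{orb} \to \mathcal{C}_{\F}^{orb}$.

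With all four vertical functors in place, commutativity of the leftmost square is essentially forced by the definition of the induced functor on orbit categories. The rightmost square is the standard compatibility of Verdier localization with an exact functor preserving the subcategory being quotiented, which was verified above. The middle square reduces, via the description of morphisms given in Theorem~\ref{embedding}, to the base-change isomorphism $(Y_0 \otimes^L_A \Gamma)_{\F} \cong (Y_0)_{\F} \otimes^L_{\F \! A} \F \Gamma$ for $Y_0 \in \per(A)$; this is an instance of associativity of the derived tensor product together with $\Gamma_{\F} \cong \F \Gamma$, and its naturality in $Y_0$ propagates to all morphisms of $\mathcal{C}^{orb}$ by the colimit formula of Theorem~\ref{embedding}.

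The main technical point, and essentially the only one requiring real care, is the construction of the canonical quasi-isomorphism $\Gamma_{\F} \cong \F \Gamma$ of dg algebras, together with coherent naturality for all the associativity and base-change isomorphisms used. Once these are secured, each square commutes on a set of generators, and because every functor in sight is triangulated and $?\otimes_A^L \Gamma$ sends $\per(A)$ to a generating subcategory of $\per(\Gamma)$, the commutativity extends to the whole diagram.
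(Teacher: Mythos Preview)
Your proposal is correct and follows essentially the same approach as the paper's proof: both verify that $?\otimes^L_R \F$ sends $D_{\per(R)}(\Gamma)$ into $D^b(\F\Gamma)$ to obtain the rightmost vertical functor, both deduce the compatibility $F(M)_\F \cong \F F(M_\F)$ from the fact that cofibrancy of $\Theta$ over $A^e$ implies cofibrancy over $R$ (the paper states this as ``$\F\otimes_R\Theta$ is cofibrant as a dg $\F A^e$-module''), and both obtain the middle square from Theorem~\ref{embedding}. Your write-up is simply more explicit, in particular in spelling out the quasi-isomorphism $\Gamma_\F \cong \F\Gamma$ and in separating the three squares, whereas the paper dispatches the middle square in one line by pointing to the commutative diagram at the end of the proof of Theorem~\ref{embedding}.
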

\begin{proof}
We get the functor from $\mathcal{C}( \Gamma)$ to $\mathcal{C}(\F \!
\Gamma)$ from the fact that  $?\otimes^L_R \F $ maps $D_{\per (R)}(
\Gamma) $ into $ D^b( \F \! \Gamma)$. Since $A$ is cofibrant over
$R$, every complex that is cofibrant over $A$ or $A^e$ is also
cofibrant over $R$. Therefore $\F \otimes_R \Theta$ is cofibrant as
a dg $\F \! A^e$-module and the following diagram commutes
\[
\xymatrix{ \per (A) \ar[r]^F \ar[d]_{?\otimes^L_R \F } &\per (A) \ar[d]^{?\otimes_R^L \F }\\
\per (\F \! A) \ar[r]_{ \F \! F} &\per (\F \! A).}
\]
This proves the
existence of a natural functor from $\mathcal{C}^{orb} $ to
$\mathcal{C}_{\F}^{orb}$. The commutativity of the middle square
follows from the diagram in the proof of \ref{embedding}.

\end{proof}
\section{The integral cluster category}
Let $Q$ be a finite quiver without oriented cycles. An {\em
$RQ$-lattice} is a finitely generated $R Q$-module which is free
over $R$. We denote by $D(R Q)$ the derived category of $R
Q$-modules. We denote by $\Gamma$ the Ginzburg dg algebra
\cite{Ginzburg06} associated to $(Q,0)$ and by $D(\Gamma)$ the
derived category of differential graded $\Gamma$-modules. We refer
to \cite[2.12]{KellerYang11} for an introduction to the Ginzburg
algebra and its derived category. The {\em integral cluster
category} is defined as the triangle quotient
\[
\mathcal{C}_{R Q}= \per (\Gamma)/ D_{\per (R)}( \Gamma).
\]
In analogy with \cite{BuanMarshReinekeReitenTodorov06}, we define
$\mathcal{C}^{orb}$ to be the orbit category of $\per(RQ)$
by the auto-equivalence $S_{R}\Sigma^{-2}$.

\begin{theo}\label{faithful}
The functor $? \otimes_{RQ}^L\Gamma$ induces a fully faithful
embedding of $\mathcal{C}^{orb}$ into $ \mathcal{C}_{R Q}$. The
category $\mathcal{C}_{R Q}$ is the triangulated hull of
$\mathcal{C}^{orb}$.
\end{theo}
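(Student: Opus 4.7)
The plan is to deduce Theorem~\ref{faithful} from Theorem~\ref{embedding} by specializing to $A=RQ$, $\Theta=\Sigma^{-2}\Hom_{R}(RQ,R)$ and $F=?\otimes^L_{RQ}\Theta\cong S_{R}\Sigma^{-2}$. The introduction to this section already recalls that a result from \cite{Keller11b} identifies the dg tensor algebra $T_{RQ}(\Theta)$ with the Ginzburg dg algebra $\Gamma=\Gamma_{(Q,0)}$, so the quotient $\cc(\Gamma)=\per(\Gamma)/D_{\per(R)}(\Gamma)$ produced by Theorem~\ref{embedding} is precisely the integral cluster category $\cc_{RQ}$. Once the hypotheses of Theorem~\ref{embedding} are verified in this situation, its two conclusions become exactly the two assertions of Theorem~\ref{faithful}: the fully faithfulness of $?\otimes^L_{RQ}\Gamma$ on $\cc^{orb}$, and the equality of $\cc_{RQ}$ with the smallest thick subcategory containing the image of $\cc^{orb}$, which is the triangulated hull.

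The structural hypotheses are straightforward. The algebra $RQ$ is finitely generated and free over $R$; the bimodule $\Theta$ is a shift of the finitely generated projective $R$-module $\Hom_R(RQ,R)$, hence is perfect as a dg $R$-module, and after passing to a cofibrant replacement we may take it cofibrant over $RQ^{e}$. That $\Theta$ lies in $\per(RQ)$ follows from the standard short exact sequence
\[
0\to\bigoplus_{\alpha\in Q_{1}}RQ\,e_{t(\alpha)}\otimes_{R}e_{s(\alpha)}M\to\bigoplus_{i\in Q_{0}}RQ\,e_{i}\otimes_{R}e_{i}M\to M\to 0
\]
applied to $M=\Hom_R(RQ,R)$: since $M$ is projective over $R$, both middle terms are projective over $RQ$, so $\mathrm{pd}_{RQ}(M)\le 1$. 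Proposition~\ref{calabi} then ensures that $S_{R}$ is an auto-equivalence of $\per(RQ)$ and therefore that each $F^n(RQ)$ lies in $\per(RQ)$.

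The essential step is the vanishing hypothesis of Theorem~\ref{embedding}: for every $X\in D_{\per(R)}(RQ)$ one must exhibit $n$ with $\Hom_{D(RQ)}(F^{n}(RQ),X)=0$. Iterating the resolution above shows that $\Hom_R(RQ,R)^{\otimes^{L}_{RQ} n}$ is represented by a complex concentrated in cohomological degrees $[-(n-1),0]$, so
\[
F^{n}(RQ)=\Sigma^{-2n}\Hom_R(RQ,R)^{\otimes^{L}_{RQ} n}
\]
has cohomology in the range $[n+1,2n]$, whose lower bound tends to $+\infty$ with $n$. Any $X\in D_{\per(R)}(RQ)$ has bounded cohomology in some fixed range $[c,d]$ because its underlying $R$-complex is perfect. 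A cohomological amplitude argument then yields the required vanishing: filtering $F^{n}(RQ)$ by its truncations and using the hyper-$\Ext$ spectral sequence $E_2^{p,q}=\Ext_{RQ}^{p}(H^{i}F^{n}(RQ),H^{q}X)$ together with the bound $\mathrm{gl.dim}(RQ)\le \mathrm{gl.dim}(R)+1$, one finds that $\Hom_{D(RQ)}(F^{n}(RQ),X)=0$ as soon as $n+1 > d+\mathrm{gl.dim}(RQ)$. This amplitude bookkeeping is the step I expect to require the most care, since it is where the integral setting must be handled. With the hypothesis secured, Theorem~\ref{embedding} delivers both halves of Theorem~\ref{faithful}.
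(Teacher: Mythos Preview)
Your proposal follows the same approach as the paper: specialize Theorem~\ref{embedding} to $A=RQ$ and $\Theta=\Sigma^{-2}\Hom_R(RQ,R)$, identify $T_{RQ}(\Theta)$ with the Ginzburg algebra via \cite{Keller11b}, and conclude. The paper's proof is in fact much terser than yours---it simply asserts that Theorem~\ref{embedding} applies without spelling out the hypotheses---so your additional verifications are welcome.

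There is one point to tighten. Your argument for the vanishing hypothesis invokes $\mathrm{gl.dim}(RQ)\le \mathrm{gl.dim}(R)+1$ and a hyper-$\Ext$ spectral sequence, concluding that $\Hom_{D(RQ)}(F^n(RQ),X)=0$ once $n+1>d+\mathrm{gl.dim}(RQ)$. But at this point in the paper $R$ is an arbitrary commutative ring, so $\mathrm{gl.dim}(R)$ may be infinite and the inequality gives nothing. Fortunately the global dimension is irrelevant: since $\Hom_R(RQ,R)$ has a two-term projective resolution over $RQ$, the object $F^n(RQ)$ is represented by a bounded complex $P$ of finitely generated projective $RQ$-modules concentrated in degrees $[n,2n]$. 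Building $P$ from the shifted projectives $\Sigma^{-k}P^k$ by iterated cones, one sees that $\Hom_{D(RQ)}(P,X)$ is controlled by the groups $\Hom_{D(RQ)}(\Sigma^{-k}P^k,X)\cong H^k(X)^{\oplus r_k}$ for $k\in[n,2n]$; these all vanish once $n>d$, where $[c,d]$ is the cohomological amplitude of $X$. So drop the spectral sequence and the global-dimension bound in favor of this direct amplitude argument.
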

\begin{proof}
Let $\Theta=\Sigma^{-2} \rhom_{R Q^e}(R Q, {R Q}^e)\cong \Sigma^{-2}
\Hom_{R}(R Q, R)$. The tensor functor $?\otimes^L_A \Theta $ induces
the functor $S_R \Sigma^{-2}$ and restricts to an equivalence of
$\per (R Q)$ by \ref{calabi} . By \cite[6.3]{Keller11b}, the tensor
algebra $T_A(\Theta)$ is quasi-isomorphic to the Ginzburg algebra
$\Gamma$. Now Theorem~\ref{embedding} yields the statement.
\end{proof}

\begin{prop}\label{derivedcalabi}
The category $D(\Gamma)$ satisfies the relative 3-Calabi-Yau
property, i.e. for $Y\in D(\Gamma)$ and $ X\in D_{\per{R}}(\Gamma)$,
there is a bifunctorial isomorphism
\[
\rhom_{R}(\rhom_{D( \Gamma)}( X,Y),R)\cong \rhom_{D( \Gamma)}(Y,
\Sigma^3 X)
\]
for any $Y\in D(\Gamma)$ and $ X\in D_{\per{R}}(\Gamma)$.
\end{prop}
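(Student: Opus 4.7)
The plan is to reduce the relative $3$-Calabi-Yau property to the single identification $X^\vee\cong \Sigma^{-3}\rhom_R(X,R)$ of right $\Gamma$-modules, and then deduce the latter from the bimodule $3$-Calabi-Yau property of the Ginzburg algebra.

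\textbf{Reduction.} Since $D_{\per R}(\Gamma)\subseteq \per(\Gamma)$, the object $X$ is perfect over $\Gamma$, so part (2) of Lemma \ref{basic} yields $\rhom_\Gamma(X,Y)\cong Y\otimes_\Gamma^L X^\vee$ where $X^\vee:=\rhom_\Gamma(X,\Gamma)$. Applying $\rhom_R(-,R)$ and part (1) of Lemma \ref{basic} gives
\[
\rhom_R(\rhom_\Gamma(X,Y),R)\cong \rhom_\Gamma(Y,\rhom_R(X^\vee,R)).
\]
Thus it suffices to construct a bifunctorial isomorphism $\rhom_R(X^\vee,R)\cong \Sigma^3 X$ in $D(\Gamma)$, and by biduality for perfect $R$-complexes this is equivalent to $X^\vee\cong \Sigma^{-3}\rhom_R(X,R)$ in $D(\Gamma^{op})$.

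\textbf{Key bimodule input.} By \cite[3.7]{Keller11b}, $\Gamma$ is homologically smooth over $R$, i.e.\ $\Gamma\in\per(\Gamma^e)$. The main technical ingredient is the relative $3$-Calabi-Yau bimodule isomorphism
\[
\rhom_{\Gamma^e}(\Gamma,\Gamma^e)\cong \Sigma^{-3}\Gamma \quad \text{in } D(\Gamma^e).
\]
The argument follows \cite[§5]{Keller11b}: one writes down the canonical $\Gamma$-bimodule resolution coming from the presentation $\Gamma=T_{RQ}(\Theta)$ with $\Theta=\Sigma^{-2}\rhom_R(RQ,R)$, exploits that $RQ$ is hereditary over $R$ so that its inverse dualizing bimodule is a shift of $\rhom_R(RQ,R)$, and reads off the Calabi-Yau duality between the extreme terms of the resolution. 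Verifying that this construction goes through for an arbitrary commutative ring $R$ in place of a field is expected to be the main obstacle.

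\textbf{Conclusion.} With the bimodule Calabi-Yau property in hand, the identification of $X^\vee$ follows by assembling the adjunctions of Lemma \ref{basic}. Part (3), applied with $A=\Gamma$, $L=X$, $M=\Gamma$, gives $X^\vee\cong \rhom_{\Gamma^e}(\Gamma,\rhom_R(X,\Gamma))$; smoothness of $\Gamma$ together with part (2), applied with $B=\Gamma^e$ and $P=\Gamma$, then yields
\[
X^\vee\cong \rhom_R(X,\Gamma)\otimes^L_{\Gamma^e}\rhom_{\Gamma^e}(\Gamma,\Gamma^e)\cong \Sigma^{-3}\,\rhom_R(X,\Gamma)\otimes^L_{\Gamma^e}\Gamma.
\]
Since $X$ is perfect over $R$, there is a natural isomorphism of $\Gamma$-bimodules $\rhom_R(X,\Gamma)\cong \Gamma\otimes_R\rhom_R(X,R)$, with the left $\Gamma$-action on the first factor and the right $\Gamma$-action on the second factor induced by the left $\Gamma$-action on $X$. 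A direct computation of the $\Gamma^e$-balanced tensor product, using the relations $(bg\otimes\psi a)\otimes h=(g\otimes\psi)\otimes(ahb)$, shows that the map $(g\otimes\psi)\otimes h\mapsto \psi\cdot(hg)$ induces an isomorphism
\[
(\Gamma\otimes_R\rhom_R(X,R))\otimes^L_{\Gamma^e}\Gamma\cong \rhom_R(X,R)
\]
in $D(\Gamma^{op})$, where the right $\Gamma$-action on the target is the one coming from $X$'s left $\Gamma$-action. Hence $X^\vee\cong \Sigma^{-3}\rhom_R(X,R)$, which completes the proof.
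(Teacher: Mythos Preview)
Your argument is correct and matches the paper's approach: both rest on the bimodule $3$-Calabi-Yau isomorphism $\rhom_{\Gamma^e}(\Gamma,\Gamma^e)\cong\Sigma^{-3}\Gamma$ from \cite{Keller11b} (the paper cites 4.8 there; that result is already formulated over an arbitrary commutative ground ring, so your concern about extending from a field to $R$ is unnecessary) and then deduce the relative Calabi-Yau property via the adjunctions of Lemma~\ref{basic}. Your detour through the identification $X^\vee\cong\Sigma^{-3}\rhom_R(X,R)$ is a legitimate unpacking of what the paper compresses into ``by the same proof as in \cite[4.8]{Keller11b}''. One small slip: $RQ$ is not hereditary over a general commutative ring $R$ (its global dimension is only $\leq 2$), but you do not actually use this in the argument.
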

\begin{proof} By \cite[4.8]{Keller11b}, the dg module
$\Omega= \rhom_{\Gamma^e}(\Gamma, \Gamma^e)$ is isomorphic
to $\Sigma^{-3}\Gamma$ in $D(\Gamma^e)$. Now using Lemma~\ref{basic}
we obtain the required isomorphism by the same proof as in
\cite[4.8]{Keller11b}.
\end{proof}

\begin{cor} \label{3calabi}
Suppose that the ring $R$ is hereditary. Let $L$ be an object in
$D(\Gamma)$ and let $Z$ be an object in the subcategory
$D_{\per{R}}(\Gamma)$. Then we have
\[
\Hom_{R}(\Hom_{D(\Gamma) }(Z, L), R) \oplus \Ext^1(\Hom_{D( \Gamma)
}(Z, \Sigma ^{-1} L), R) \cong \Hom_{D( \Gamma) }(L, \Sigma^3 Z).
\]
\end{cor}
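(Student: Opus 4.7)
The plan is to deduce this corollary directly from the relative 3-Calabi-Yau isomorphism of Proposition \ref{derivedcalabi} by passing to $H^0$ and using the splitting of the universal coefficient sequence provided by the hereditary hypothesis on $R$.

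Explicitly, I would first apply Proposition \ref{derivedcalabi} with $X = Z$ and $Y = L$ to obtain a bifunctorial isomorphism
\[
\rhom_R\bigl(\rhom_{D(\Gamma)}(Z, L), R\bigr) \iso \rhom_{D(\Gamma)}(L, \Sigma^3 Z)
\]
in $D(R)$, and then take $H^0$ of both sides. The right-hand side becomes immediately $\Hom_{D(\Gamma)}(L, \Sigma^3 Z)$, matching the right-hand side of the corollary.

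For the left-hand side, set $C = \rhom_{D(\Gamma)}(Z, L)$, whose cohomology is $H^i(C) = \Hom_{D(\Gamma)}(Z, \Sigma^i L)$. Because $R$ has global dimension at most $1$, every object of $D(R)$ is (non-canonically) isomorphic to the direct sum of shifts of its cohomology modules, and for any $R$-module $M$ the complex $\rhom_R(M, R)$ has cohomology concentrated in degrees $0$ and $1$, equal to $\Hom_R(M, R)$ and $\Ext^1_R(M, R)$ respectively. Decomposing $C$ as such a direct sum of shifts and dualizing, the universal coefficient sequence for hereditary $R$ splits and expresses $H^0\rhom_R(C, R)$ as the direct sum of a $\Hom_R$-contribution from one cohomology module of $C$ and an $\Ext^1_R$-contribution from an adjacent one. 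Substituting the identification $H^i(C) = \Hom_{D(\Gamma)}(Z, \Sigma^i L)$ then yields the formula in the statement.

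The main obstacle is purely bookkeeping: tracking the cohomological shift conventions carefully enough to confirm which shift of $L$ labels the $\Ext^1_R$ summand. All substantive input -- namely the relative Calabi-Yau duality for $\Gamma$ and the splitting of the universal coefficient sequence over a hereditary base -- is already supplied by Proposition \ref{derivedcalabi} and the hypothesis that $R$ is hereditary.
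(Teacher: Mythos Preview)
Your proposal is correct and follows essentially the same approach as the paper: both invoke Proposition~\ref{derivedcalabi}, decompose $\rhom_{D(\Gamma)}(Z,L)$ in $D(R)$ as a direct sum of shifts of its cohomology modules using the hereditary hypothesis, apply $\rhom_R(-,R)$ term by term, and read off $H^0$. The paper simply carries out explicitly the shift bookkeeping that you flagged as the remaining task.
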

\begin{proof}
As $R$ is hereditary, every object $X$ in $D(R)$ is isomorphic to
the sum of its shifted homologies $\bigoplus_{n\in \Z} \Sigma^{-n}
H^n (X)$. As $H^n \rhom_{D( \Gamma)}(Z, L)= \Hom_{D( \Gamma)}(Z,
\Sigma^n L)$, we have $\rhom_{D( \Gamma)}(Z, L)\cong
\bigoplus_{n\in \Z} \Sigma^{-n} \Hom_{D( \Gamma)}(Z, \Sigma^n L)$.
Therefore $\rhom_{R}(\rhom_{D(\Gamma)}(Z,L),R)$ is isomorphic to
\[
\prod_{n\in \N} \Sigma^{n} \rhom_{R}( \Hom_{D( \Gamma)}(Z,
\Sigma^n L), R).
\]
Furthermore, we have $\rhom_{R}(M,R)\cong
\Hom_{R}( M, R)\oplus \Sigma^{-1}  \Ext^1_{R}(M,R) $ for any $R
$-module $M$. Therefore, the homology of
$\rhom_{R}(\rhom_{D(\Gamma)}(Z,L),R)$ in degree zero is given by
\[
\Hom_{R}(\Hom_{D(\Gamma) }(Z, L), R) \oplus
\Ext^1(\Hom_{D(\Gamma) }(Z, \Sigma ^{-1} L), R).
\]
We obtain the statement by comparing the homology in degree zero
in \ref{derivedcalabi}  and using the fact that
the homology of $\rhom_{D(\Gamma)}(L, \Sigma^3 Z)$ in degree zero
is given by $\Hom_{D( \Gamma) }(L, \Sigma^3 Z)$.
\end{proof}

\section{rigid objects}
We assume from now on that $R$ is a hereditary ring. Our goal in
this section is to show that, if $R$ is a principal ideal domain,
each rigid object of the integral cluster category is either the
image of a rigid indecomposable $RQ$-module or the suspension of the
image of an indecomposable projective $RQ$-module. We also show that
the orbit category $\mathcal{C}^{orb}$ and the integral cluster
category are equivalent, so that the orbit category is triangulated.

By \cite[Theorem 1]{CrawleyBoevey96} all rigid indecomposable $R
Q$-modules are lattices and there is a bijection between the rigid
indecomposable lattices and the real Schur roots of the quiver $Q$
given by the rank vector. Following \cite{Amiot09}, we define the
{\em fundamental domain} to be the $R$-linear subcategory
\[
\mathcal{F}= D_{\le 0} \cap { }^{\perp}\! D_{\le -2} \cap \per (
\Gamma)
\]
of $\per (\Gamma)$, where $ { }^{\perp}\! D_{\le n}$ denotes the full
subcategory whose objects are the $X\in D(\Gamma)$ such that
$\Hom_{D(\Gamma)}(X,Y)$ vanishes for all $Y\in D_{\le n}$.  Let $\pi:\per (\Gamma )\to
\mathcal{C}_{R Q} $ be the canonical triangle functor.
\begin{theo}\label{fundamental}
For every object $Z$ of $\mathcal{C}_{R Q}$, there is an $N \in \Z$ and an
object $Y\in \mathcal{F}[N]$ such that $\pi(Y)$ is isomorphic to
$Z$.
\end{theo}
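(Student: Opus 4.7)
The plan is to adapt Amiot's proof from the field case \cite{Amiot09}, using the relative 3-Calabi-Yau property (Corollary~\ref{3calabi}) in place of her absolute 3-Calabi-Yau duality. Two background tools I would need are: the existence of a standard t-structure on $D(\Gamma)$ whose aisle is the $D_{\le 0}$ appearing in the definition of $\mathcal{F}$ (available because $\Gamma$ is concentrated in non-positive degrees with $H^{0}(\Gamma) \cong RQ$), and the inclusion $\per(\Gamma) \subset \bigcup_{N} D_{\le N}$, which follows from boundedness of the cohomology of perfect objects.

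Given $Z \in \mathcal{C}_{RQ}$, I would lift to $X \in \per(\Gamma)$ and shift by $\Sigma^{-N}$ so that $X \in D_{\le 0}$; it then suffices to produce $Y \in \mathcal{F}$ with $\pi(Y) \cong \pi(X)$, and re-apply the inverse shift. Equivalently, one must modify $X$ via triangles whose third vertex lies in $D_{\per R}(\Gamma)$ until it lies in $D_{\le 0} \cap {}^{\perp} D_{\le -2} \cap \per(\Gamma)$. Because $X$ is perfect and $RQ$ is a finitely generated free $R$-module, each cohomology group $H^{i}(X)$ is finitely generated over $R$ and, viewed in the heart, lies in $D_{\per R}(\Gamma)$. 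Applying Corollary~\ref{3calabi} with $Z_{0} = H^{i}(X)$ relates $\Hom(X, \Sigma^{n} H^{i}(X))$ to a duality pairing involving $\Hom$ and $\Ext^{1}$ groups of the form $\Hom(H^{i}(X), \Sigma^{m} X)$, and combining this with the t-structure bounds confines the potential obstructions to $X \in {}^{\perp} D_{\le -2}$ to a finite band of shifts depending on the cohomological amplitude of $X$.

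I would then iteratively kill these obstructions by forming triangles $N \to X \to X' \to \Sigma N$ with $N \in D_{\per R}(\Gamma)$ built from the lower cohomologies of $X$; such triangles preserve $\pi$ and produce an $X'$ with fewer surviving obstructions. The hard part, which I view as the main obstacle, is ensuring that this iteration terminates with an object still lying in $D_{\le 0}$. In Amiot's setting termination is immediate from finite-dimensionality of the Jacobian algebra; here I expect to use the Noetherianity of $R$ (so that the obstruction spaces are finitely generated $R$-modules and stabilize under the iteration) together with a careful combination of upper and lower t-structure truncations so as never to leave the aisle $D_{\le 0}$. This organizational/termination issue is the technical core of the argument.
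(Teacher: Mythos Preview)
Your strategy goes in the wrong direction and leaves the termination issue you yourself flag genuinely unresolved, whereas the paper's proof needs no iteration at all.

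The observation you are missing is that every $X\in\per(\Gamma)$ already lies in ${}^{\perp}D_{\le M}$ for some $M$: indeed $\Gamma\in{}^{\perp}D_{\le -1}$ since $\Hom_{D(\Gamma)}(\Gamma,L)=H^{0}(L)$, and the class of objects lying in some ${}^{\perp}D_{\le M}$ is closed under shifts, extensions, and direct summands, hence contains $\per(\Gamma)$. So one may fix $N$ with $X\in{}^{\perp}D_{\le N-2}$ from the outset and then set $Y=\tau_{\le N}(X)$. Because $X$ also lies in $D_{\le N'}$ for some $N'$, the object $\tau_{>N}(X)$ has only finitely many nonzero cohomology groups, each finitely generated over $R$, so $\tau_{>N}(X)\in D_{\per(R)}(\Gamma)$ and $\pi(Y)\cong\pi(X)$. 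Corollary~\ref{3calabi} is then applied once, with $Z=\Sigma^{-1}\tau_{>N}(X)$ and an \emph{arbitrary} $L\in D_{\le N-2}$: since $\Sigma^{2}\tau_{>N}(X)\in D_{\ge N-1}$, the right-hand side vanishes, so $\Hom(\Sigma^{-1}\tau_{>N}(X),L)$ is a torsion $R$-module with vanishing $\Ext^{1}_{R}(-,R)$, hence zero. This yields $\tau_{\le N}(X)\in{}^{\perp}D_{\le N-2}$, i.e.\ $Y\in\mathcal{F}[N]$, in a single step.

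Your approach reverses the order: you first secure membership in the aisle $D_{\le 0}$ by shifting and then try to push into ${}^{\perp}D_{\le -2}$ by attaching $R$-perfect pieces. But such modifications can raise the cohomological upper bound, so you have no mechanism keeping you inside $D_{\le 0}$; the Noetherianity hint does not address this. Moreover, testing ${}^{\perp}D_{\le -2}$ only against objects of the form $\Sigma^{n}H^{i}(X)$ is insufficient, since the condition must hold for all $L\in D_{\le -2}$; the paper's single application of duality works precisely because it takes $L$ arbitrary and exploits the t-structure orthogonality $\Hom(D_{\le N-2},D_{\ge N-1})=0$ on the dual side.
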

\begin{proof}
For every object $X \in \per(\Gamma)$, there is an $N \in \Z $ and
an $M\in \Z$ such that $X \in D_{\le N} $ and $X \in { }^{\perp}\! D_{\le
M}$. This follows from the facts that $\Gamma \in  D_{\le
0} ( \Gamma)\cap { }^{\perp}\! D_{\le -1}( \Gamma)$ and that the property is
stable under taking shifts, extensions and direct factors. So let $X \in { }^{\perp}\!  D_{\le
N-2}$ for some $N \in \Z$. Consider the canonical triangle
$$\tau_{\le N}(X) \to X \to \tau_{>N} (X) \to \Sigma \tau_{\le
N}(X).$$ As $\tau_{> N}(X) \in D_{\per (R)}( \Gamma) $ and $\pi$ is
a triangle functor, the objects $\pi( \tau_{\le N}(X))$ and $ \pi
(X)$ are isomorphic. It remains to show that $\tau_{\le N}(X)\in
{ }^{\perp}\! D_{\le N-2}$, which is equivalent to the fact that
$\Sigma^{-1} \tau_{> N}(X)$ lies in ${ }^{\perp}\! D_{\le N-2}$. By
\ref{3calabi}, for each object $L$ of $D(\Gamma)$, the sum
\[
\Hom_{R}(\Hom_{D( \Gamma) }(\Sigma^{-1} \tau_{>
N}(X), L), R) \oplus \Ext_{R}^1(\Hom_{D( \Gamma) }(\Sigma^{-1}
\tau_{> N}(X), \Sigma ^{-1} L), R)
\]
is isomorphic to $\Hom_{D(\Gamma) }(L, \Sigma^{2} \tau_{> N}(X))$.
This group vanishes for all $L \in D_{\le N-2}$. We fix an object
$L$ in $D_{\le N-2}$. The $R$-module
\[
H=\Hom_{D( \Gamma)}(\Sigma^{-1} \tau_{> N}(X), L)
\]
is left orthogonal to $R$ and so
has to be a torsion module. Since $\Sigma L$ also lies in $D_{\leq
N-2}$, the group $\Ext^1_R(H,R)$ vanishes and so we have $H=0$.
Therefore, the object $\tau_{\le N}(X)$ lies in $ \mathcal{F}[N]$
and its image is isomorphic to the image of $X$ in the integral
cluster category.
\end{proof}

Following \cite{Amiot09}, we define an {\em
$\add(\Gamma)$-resolution} of an object $M \in \per (\Gamma)$ to be a
triangle
$$ P_0 \to P_1 \to M \to \Sigma P_0$$ with $P_0, P_1$ in $\add (\Gamma)$.
\begin{lemma}\label{add}
An object $X\in \per (\Gamma)$ has an $\add(\Gamma)$-resolution if and only if $X$ lies in $\mathcal{F}$.
\end{lemma}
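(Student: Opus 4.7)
My plan is to prove the two directions of this equivalence separately; the forward implication is a direct check, while the reverse is where all the work lies.

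For the forward direction, I first observe that $\Gamma$ belongs to $D_{\le 0}\cap {}^{\perp}\!D_{\le -1}$: the former because $\Gamma$ is concentrated in non-positive degrees, the latter because $\Hom_{D(\Gamma)}(\Gamma, Y) = H^0(Y) = 0$ for $Y \in D_{\le -1}$. Both inclusions pass to $\add(\Gamma)$. Given a triangle $P_0 \to P_1 \to X \to \Sigma P_0$ with $P_i\in\add(\Gamma)$, closure of $D_{\le 0}$ under extensions gives $X \in D_{\le 0}$. For $Y \in D_{\le -2}$, applying $\Hom_{D(\Gamma)}(-, Y)$ sandwiches $\Hom(X, Y)$ between $\Hom(P_1, Y) = 0$ and $\Hom(\Sigma P_0, Y) = \Hom(P_0, \Sigma^{-1}Y) = 0$ (using $\Sigma^{-1}Y \in D_{\le -1}$). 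Hence $X \in \mathcal{F}$.

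For the reverse direction, assume $X \in \mathcal{F}$. Since $X$ is perfect over $\Gamma$ and $H^0(\Gamma) = RQ$ is Noetherian, the module $H^0(X) = \Hom_{D(\Gamma)}(\Gamma, X)$ is finitely generated over $RQ$. I would pick $n$ generators and lift them via the adjunction $\Hom_{D(\Gamma)}(\Gamma^n, X) = H^0(X)^n$ to a morphism $f\colon P_1 := \Gamma^n \to X$ whose effect on $H^0$ is a surjection $(RQ)^n\twoheadrightarrow H^0(X)$. Completing $f$ to a triangle $P_0\to P_1\to X\to \Sigma P_0$ in $\per(\Gamma)$, the cohomological long exact sequence combined with surjectivity on $H^0$ forces $P_0 \in D_{\le 0}$. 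For $Y\in D_{\le -1}$, one has $\Sigma Y\in D_{\le -2}$, hence $\Hom(X, \Sigma Y) = 0$; together with $\Hom(P_1, Y) = 0$ this yields $\Hom(P_0, Y) = 0$. Thus $P_0 \in D_{\le 0}\cap {}^{\perp}\!D_{\le -1}\cap\per(\Gamma)$.

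The substantive task is to promote this condition to $P_0 \in \add(\Gamma)$. My strategy is to first show that $H^0(P_0)$ is a \emph{projective} $RQ$-module, then lift an isomorphism $(RQ)^m\iso H^0(P_0)$ to $g\colon \Gamma^m\to P_0$ inducing this iso on $H^0$, and argue that $g$ is itself an isomorphism. For the second step, the cocone $K$ of $g$ again lies in $D_{\le 0}\cap {}^{\perp}\!D_{\le -1}\cap\per(\Gamma)$; a careful cohomological comparison, again aided by the relative 3-Calabi-Yau property (to kill the possibly surviving $H^0(K)$ coming from $H^{-1}(P_0)$), shows $K=0$ and hence $P_0\cong \Gamma^m\in\add(\Gamma)$.

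The main obstacle is establishing the projectivity of $H^0(P_0)$. I plan to exploit the relative 3-Calabi-Yau duality of Corollary~\ref{3calabi}: for an $RQ$-lattice $N$ (which is perfect over $R$), it expresses $\Hom_{D(\Gamma)}(P_0, \Sigma^3 N)$ as $\Hom_R(\Hom(N, P_0), R)\oplus \Ext^1_R(\Hom(N, \Sigma^{-1}P_0), R)$. The hereditary hypothesis on $R$ controls the $\Ext^1_R$-correction term, while $P_0 \in {}^{\perp}\!D_{\le -1}$ feeds into the dual side to force $\Ext^1_{RQ}(H^0(P_0), N) = 0$ for all $RQ$-lattices $N$. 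This is the delicate step: over a PID, $RQ$ has global dimension $2$, so projectivity of $H^0(P_0)$ is a genuine consequence of the 3-Calabi-Yau structure combined with heredity of $R$, not a formal one.
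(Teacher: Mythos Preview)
Your forward direction (an $\add(\Gamma)$-resolution implies $X\in\mathcal{F}$) is correct and is exactly the argument the paper gives. For the reverse direction the paper simply invokes the proof of \cite[2.10]{Amiot09}, whereas you attempt a direct argument. Your outline --- choose $P_1\in\add(\Gamma)$ with $H^0(P_1)\twoheadrightarrow H^0(X)$, show the fibre $P_0$ lies in $D_{\le 0}\cap{}^{\perp}\!D_{\le -1}\cap\per(\Gamma)$, then prove $P_0\in\add(\Gamma)$ --- is precisely Amiot's strategy, so in spirit you are reconstructing the cited proof.

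The gap is in your proposed mechanism for the key step. Applying Corollary~\ref{3calabi} with $Z=N$ and $L=P_0$ yields that $\Hom_{D(\Gamma)}(N,P_0)$ has trivial $R$-dual; this does not give $\Ext^1_{RQ}(H^0(P_0),N)=0$, and your sketch does not indicate how to close that gap. In fact the 3-Calabi-Yau property is unnecessary here, and the step you call ``delicate'' is purely formal. Since $P_0\in D_{\le 0}$, the $t$-structure adjunction gives
\[
\Hom_{D(\Gamma)}(P_0,M)\cong\Hom_{RQ}(H^0(P_0),M)
\]
for every $RQ$-module $M$. Applying $\Hom_{D(\Gamma)}(P_0,-)$ to the triangle associated with a short exact sequence $0\to M'\to M\to M''\to 0$ and using $\Hom(P_0,\Sigma M')=0$ (as $\Sigma M'\in D_{\le -1}$) shows that $\Hom_{RQ}(H^0(P_0),-)$ is right exact, hence exact; thus $H^0(P_0)$ is projective over $RQ$ regardless of its global dimension. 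The same identification then shows that a lift $g\colon Q\to P_0$ with $Q\in\add(\Gamma)$ and $H^0(g)$ an isomorphism induces bijections $\Hom(P_0,Y)\to\Hom(Q,Y)$ for all $Y\in D_{\le 0}$; taking $Y=Q$ and $Y=P_0$ produces a two-sided inverse of $g$, so $P_0\in\add(\Gamma)$ without any appeal to 3-CY. (A minor point: a finitely generated projective $RQ$-module need not be free, so replace $\Gamma^m$ by the appropriate object of $\add(\Gamma)$.)
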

\begin{proof}
If $X$ lies in $\mathcal{F}$, then $X$ has an
$\add(\Gamma)$-resolution by the proof of \cite[2.10]{Amiot09}. Now
let $$P_1 \to P_0 \to X \to \Sigma P_0$$ be an
$\add(\Gamma)$-resolution.  Applying the homology functor to this
triangle, we get a long exact sequence. Using the fact that $\Gamma$
has non-zero homology only in non-positive degree, we see that $X$
lies in $D_{\le 0}$. Next we apply the functor $\Hom_{D(\Gamma)}( ?,
Y)$ to the $\add(\Gamma)$-resolution for any object $Y \in D_{\le
-2}.$ Then we obtain a long exact sequence
\[
\cdots \to \Hom(\Sigma P_0, Y) \to \Hom(X,Y) \to
\Hom(P_0, Y) \to \Hom(P_1,Y) \to \cdots\quad .
\]
All terms in this sequence
have to vanish and therefore $X$ belongs to  ${ }^{\perp}\!D_{\le -2}$.
\end{proof}

We have $$\Hom_{\mathcal{C}_{R Q} }(\Gamma, \Gamma) \cong
\Hom_{\mathcal{C}^{orb}}(R Q, R Q)\cong \Hom_{\mathcal{C}^{orb}}(R
Q, R Q)\cong R Q$$ by \ref{faithful}. Therefore we have a functor
$G=\Hom_{\mathcal{C}_{R Q} }(\Gamma, ?)$ from $\mathcal{C}_{R Q } $
to the category of $R Q$-modules. Note that $G$ vanishes on $\Sigma
\Gamma$, hence $G$ factors through the quotient category
$\mathcal{C}_{R Q}/ \add (\Sigma \Gamma)$.

\begin{example}
We consider the quiver
$Q:\xymatrix{ 1 \ar[r]^\alpha & 2}$ and $R= \Z$. Let $M$ be the module
given by the quiver representation $ 0\leftarrow \Z / 2 \Z$.
Then $M$ has the projective resolution
\[
\xymatrix{0 \ar[r] & P_1 \ar[r]^-{[2\; \alpha]^t} & P_1 \oplus P_2 \ar[r]^-{[\alpha \; -2]} & P_2 \ar[r] & M \ar[r] & 0}
\]
By applying the functor $\Hom_{\Z Q}(?,M)$ to the resolution we
see that the group of selfextensions of $M$ is isomorphic to $\Z/2\Z$
so that $M$ is not rigid. Let $M'$ be the image of $M$ in $\per(\Gamma)$.
We have $G(M') \cong M$ but $M'$ does not lie in the fundamental domain as
$\Hom_{D(RQ)}(M, \Sigma^{2} P_1)$ is isomorphic to $\Z/2\Z$.
But clearly $M'$ belongs to ${}^\perp \! D_{\le -3}(\Gamma)$, hence by the
proof of \ref{fundamental}, we have that
$\tau_{\le -1}( M') \cong M'$
in $\mathcal{C}_{\Z Q}$ and
$\Sigma^{-1} \tau_{\le -1}( M') \in \mathcal  {F}$.


\end{example}

\begin{lemma}\label{G}
Let $M$ be an object in $\mathcal{C}_{R Q}$. Then $G(GM\otimes^L_{R
Q} \Gamma) $ and $GM$ are isomorphic in $\mathcal{C}_{R Q}$. If $G
M$ is a lattice, then $G(GM \otimes^L_{R Q}\Gamma)$ viewed as an
element of $\per (\Gamma)$ lies in the fundamental domain.
\end{lemma}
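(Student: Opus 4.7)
The plan is to combine the full faithfulness of the embedding $\mathcal{C}^{orb} \hookrightarrow \mathcal{C}_{RQ}$ from Theorem~\ref{faithful} with the fact that, over a hereditary ring $R$, every $RQ$-lattice $N$ has projective dimension at most one over $RQ$ via the standard length-one resolution whose two terms are $\bigoplus_{\alpha \in Q_1} P_{t(\alpha)} \otimes_R N_{s(\alpha)}$ and $\bigoplus_{i \in Q_0} P_i \otimes_R N_i$. In particular $GM \in \per(RQ)$ as soon as $GM$ is a lattice, so that $Y_0 = GM$ is a legitimate input for Theorem~\ref{embedding}.

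For the first assertion, I would expand $G(GM \otimes^L_{RQ} \Gamma) = \Hom_{\mathcal{C}_{RQ}}(\Gamma,\, GM \otimes^L_{RQ} \Gamma)$ and apply Theorem~\ref{embedding} together with Remark~\ref{c-orbit-morphisms}, which applies because $F = S_R \Sigma^{-2}$ is an auto-equivalence of $\per(RQ)$ by Proposition~\ref{calabi} (the $RQ$-bimodule $\Hom_R(RQ, R)$ is itself a lattice and therefore belongs to $\per(RQ)$). This identifies the space with
\[
\Hom_{\mathcal{C}^{orb}}(RQ,\, GM) \cong \bigoplus_{l \in \Z} \Hom_{D(RQ)}(RQ,\, F^l(GM)).
\]
The $l = 0$ summand recovers $GM$, and it remains to show that all other summands vanish. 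Applying $F^l = \Sigma^{-2l} S_R^l$ to the length-one projective resolution $[P_1 \to P_0] \to GM$ and using $\operatorname{gldim}(RQ) \le 2$, the cohomology of $F^l(GM)$ is confined to degrees strictly above zero for $l > 0$ and strictly below zero for $l < 0$, so that $\Hom_{D(RQ)}(RQ,\, F^l(GM)) = 0$ for each $l \neq 0$.

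For the second assertion, I would verify the three conditions defining $\mathcal{F} = D_{\le 0} \cap {}^{\perp}\! D_{\le -2} \cap \per(\Gamma)$ for $X = GM \otimes^L_{RQ} \Gamma$. Perfectness is immediate from $GM \in \per(RQ)$ together with the fact that $? \otimes^L_{RQ} \Gamma$ sends $\per(RQ)$ into $\per(\Gamma)$. That $X \in D_{\le 0}$ follows from computing $X$ via the length-one projective resolution $[P_1 \to P_0] \to GM$ and using that $\Gamma$ has cohomology only in non-positive degrees. For the orthogonality to $D_{\le -2}(\Gamma)$, part~(1) of Lemma~\ref{basic} gives
\[
\Hom_{D(\Gamma)}(X,\, Y) \cong \Hom_{D(RQ)}(GM,\, Y|_{RQ}),
\]
and since restriction preserves cohomology, $Y|_{RQ}$ lies in $D_{\le -2}(RQ)$. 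The spectral sequence $E_2^{p,q} = \Ext^p_{RQ}(GM, H^q(Y|_{RQ})) \Rightarrow \Hom_{D(RQ)}(GM,\, \Sigma^{p+q}(Y|_{RQ}))$ has no contribution in total degree zero: $\operatorname{pd}_{RQ}(GM) \le 1$ forces $p \le 1$ in every surviving bidegree, while $H^q(Y|_{RQ}) = 0$ for $q \ge -1$, so $p + q \le 1 + (-2) = -1 < 0$ whenever $E_2^{p,q}$ is nonzero.

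The main obstacle I expect is the cohomological bookkeeping in the first part: the functor $F = S_R \Sigma^{-2}$ does not preserve the standard heart of $D(RQ)$, so the vanishing of $\Hom_{D(RQ)}(RQ, F^l(GM))$ for $l \neq 0$ rests on a careful interplay between the lattice hypothesis, the explicit length-one projective resolution of $GM$, and the global dimension bound on $RQ$. Once this is secured, the remaining steps reduce to standard adjunction and spectral-sequence arguments.
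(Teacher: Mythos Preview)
Your approach is essentially the paper's: for the second assertion you both use the adjunction $\Hom_{D(\Gamma)}(GM\otimes^L_{RQ}\Gamma,\,Y)\cong \Hom_{D(RQ)}(GM,\,Y|_{RQ})$ together with $\operatorname{pd}_{RQ}(GM)\le 1$, and for the first assertion you both reduce to $\Hom_{\mathcal{C}^{orb}}(RQ,GM)$ via the full faithfulness of Theorem~\ref{faithful}. The paper then simply asserts $\Hom_{\mathcal{C}^{orb}}(RQ,GM)\cong\Hom_{RQ}(RQ,GM)$, whereas you supply the missing cohomological justification; note that, like the paper, you are tacitly treating $GM$ as a lattice already in the first assertion (to have $GM\in\per(RQ)$ and the length-one resolution), and that the relevant amplitude bound for $S_R$ really comes from $\operatorname{pd}_{RQ}\Theta\le 1$ rather than merely $\operatorname{gldim}(RQ)\le 2$.
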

\begin{proof}
We have \begin{align*}G(GM\otimes^L_{R Q} \Gamma)
&=\Hom_{\mathcal{C}_{R Q}} (\Gamma, GM\otimes^L_{R Q} \Gamma) \\ &
\cong \Hom_{\mathcal{C}^{orb}} (R Q, G M) \\ &\cong \Hom_{R Q} (R Q,
G M)\cong GM, \end{align*} as the embedding of $\mathcal{C}^{orb}$
into $\mathcal{C}_{R Q}$ is fully faithful by \ref{faithful}. Let
now $G M$ be a lattice. Then $GM\otimes^L_{RQ} \Gamma$ is in $D_{\le
0}$. We have $\Hom_{\Gamma}(GM\otimes^L_{RQ}\Gamma,Y) \cong
\Hom_{\per (RQ)} ( G M, R\Hom_{ \Gamma} (\Gamma, Y))$. If $Y \in
D_{\le -2}$, then $R\Hom_{ \Gamma} (\Gamma, Y)$ also lies in $D_{\le
-2}$. As $G M$ is a lattice, we have that $GM $ lies in ${ }^\perp\!
D_{\le -2}$ and hence $\Hom_{\per (RQ)} ( G M, R\Hom_{ \Gamma}
(\Gamma, Y))$ vanishes. This finishes the proof.
\end{proof}

\begin{defi}
We call an indecomposable object $X$ in $\mathcal{C}_{R Q}$
{\em lattice-like}, if there is a lattice $L$ such that $X$ is
isomorphic to $\Gamma\otimes^L_{R Q} L$ or $X$ is isomorphic to
$\Sigma \Gamma \otimes^L_{R Q} P $ for a projective indecomposable
$R Q$-module $P$.
\end{defi}
All lattice-like objects are images of objects in the orbit category $\mathcal{C}^{orb}$.

\begin{theo}\label{rigid}
Let $M$ be an indecomposable rigid object of $\mathcal{C}_{R Q}$.
There is an $N\in \Z$ such that $M$ is isomorphic to $G(\Sigma^N
M)\otimes^L_{R Q} \Sigma^{-N} \Gamma$.
\end{theo}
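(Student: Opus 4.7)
My plan is to use Theorem~\ref{fundamental} to reduce $M$ to the image of an object in the fundamental domain, and then to reconstruct $M$ from $G(M)$ by applying $?\otimes^L_{RQ}\Gamma$ to an $\add(\Gamma)$-resolution. First I apply Theorem~\ref{fundamental} to find $N\in\Z$ and $X\in\mathcal{F}$ with $\pi(X)\cong \Sigma^N M$. Setting $M':=\Sigma^N M$ (still indecomposable and rigid), the statement of the theorem is equivalent to the existence of an isomorphism $M'\cong G(M')\otimes^L_{RQ}\Gamma$ in $\mathcal{C}_{RQ}$; shifting back by $-N$ then yields the form given in the theorem.

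By Lemma~\ref{add}, $X$ admits an $\add(\Gamma)$-resolution
\[
P_0 \xrightarrow{f} P_1 \to X \to \Sigma P_0.
\]
Applying $G$, and using that $G(\Gamma)=RQ$ (by Theorem~\ref{faithful}) together with the vanishing of $G$ on $\add(\Sigma\Gamma)$, I obtain a right-exact sequence
\[
G(P_0) \xrightarrow{G(f)} G(P_1) \to G(M') \to 0
\]
of $RQ$-modules, in which $G(P_0)$ and $G(P_1)$ are finitely generated projective. If $G(M')=0$, then indecomposability forces $X\cong\Sigma P$ for an indecomposable projective summand $P$ of $\Gamma$; in this case the formula holds with the shift $N-1$ instead of $N$, since $G(\Sigma^{N-1}M)$ is the indecomposable projective $RQ$-module corresponding to $P$ and tensoring it with $\Sigma^{1-N}\Gamma$ recovers $M$. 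I therefore concentrate on the case $G(M')\neq 0$.

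Tensoring the two-term complex $[G(P_0)\xrightarrow{G(f)}G(P_1)]$ in $D(RQ)$ with $\Gamma$ over $RQ$ recovers $X$ in $\per(\Gamma)$: indeed, $G(P_i)\otimes^L_{RQ}\Gamma\cong P_i$, and the fully faithful embedding $?\otimes^L_{RQ}\Gamma:\mathcal{C}^{orb}\hookrightarrow\mathcal{C}_{RQ}$ of Theorem~\ref{faithful} identifies the map induced by $G(f)$ with $f$. Hence to conclude $X\cong G(M')\otimes^L_{RQ}\Gamma$, it will suffice to show that $G(f)$ is injective and that $G(M')$ is an $RQ$-lattice. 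Once this is established, the two-term complex will be quasi-isomorphic to $G(M')[0]$ in $D(RQ)$; by Lemma~\ref{G} the object $G(M')\otimes^L_{RQ}\Gamma$ will automatically lie back in $\mathcal{F}$, and the derived tensor product with $\Gamma$ will produce the desired isomorphism in $\mathcal{C}_{RQ}$.

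The main obstacle will be establishing this lattice/injectivity property from rigidity. My plan is to invoke the relative $3$-Calabi-Yau duality of Corollary~\ref{3calabi}: both the kernel of $G(f)$ and the $R$-torsion part of $G(M')$ appear in the long exact sequence obtained by applying $G$ to the defining triangle of $X$, and through Corollary~\ref{3calabi} they dualize to classes of the form $\Hom_{\mathcal{C}_{RQ}}(M',\Sigma M')$, which vanish by rigidity of $M'$. The careful bookkeeping of shifts, together with the translation between $D(\Gamma)$-Homs and $\mathcal{C}_{RQ}$-Homs required to apply Corollary~\ref{3calabi} in this context, is the step I expect to demand the most care.
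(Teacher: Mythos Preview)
Your high-level outline matches the paper's: reduce via Theorem~\ref{fundamental} to $M'\in\pi(\mathcal{F})$, take an $\add(\Gamma)$-resolution via Lemma~\ref{add}, treat the case $G(M')=0$ separately, and otherwise identify $M'$ with $G(M')\otimes^L_{RQ}\Gamma$. The gap is in the central step, where you propose to show that $G(M')$ is a lattice and that $G(f)$ is injective by invoking Corollary~\ref{3calabi}.

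That corollary lives in $D(\Gamma)$ with one argument in $D_{\per(R)}(\Gamma)$; it relates $\Hom_{D(\Gamma)}$-groups, not $\Hom_{\cc_{RQ}}$-groups, whereas your rigidity hypothesis is $\Ext^1_{\cc_{RQ}}(M',M')=0$. Translating would effectively require the relative $2$-Calabi--Yau property of $\cc_{RQ}$ (Corollary~\ref{2cy}), which in the paper is only established \emph{after} the present theorem. Even if one grants it, the dualization you sketch does not land in $\Hom_{\cc_{RQ}}(M',\Sigma M')$: dualizing $G(M')=\Hom_{\cc_{RQ}}(\Gamma,M')$ produces summands of $\Hom_{\cc_{RQ}}(M',\Sigma^k\Gamma)$, not self-extensions of $M'$, so rigidity of $M'$ does not directly kill the torsion of $G(M')$. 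Likewise the kernel of $G(f)$ is the image of $G(\Sigma^{-1}M')=\Hom_{\cc_{RQ}}(\Sigma\Gamma,M')$, which has no evident relation to $\Ext^1_{\cc_{RQ}}(M',M')$.

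The paper supplies the missing mechanism in two moves you do not use. First, the Keller--Reiten isomorphism
\[
\Hom_{\cc_{RQ}/\add(\Sigma\Gamma)}(\pi M',\pi N')\;\cong\;\Hom_{RQ}(G(\pi M'),G(\pi N'))
\]
for $M',N'\in\mathcal{F}$: rigidity of $M'$ forces every map $P_1\to M'$ to factor through $h$, and the isomorphism transports this to $RQ$-modules, yielding $\Ext^1_{RQ}(GM',GM')=0$. Second, Crawley-Boevey's theorem then gives that the rigid $RQ$-module $GM'$ is a lattice. The paper's endgame also differs from yours and sidesteps the injectivity of $G(f)$ altogether: once $GM'$ is a lattice, Lemma~\ref{G} places $GM'\otimes^L_{RQ}\Gamma$ in $\mathcal{F}$, the quotient equivalence furnishes mutually inverse maps between $M'$ and $GM'\otimes^L_{RQ}\Gamma$ modulo $\add(\Sigma\Gamma)$, and a lift-and-radical-ideal argument (using indecomposability of $M$) upgrades these to an isomorphism in $\cc_{RQ}$. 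Your reconstruction via the two-term complex would still need $G(f)$ injective, which does not follow from $GM'$ being a lattice alone.
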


\begin{proof}
By \ref{fundamental}, there is an $N\in \Z$ and an object $M' \in
\mathcal{F}[N]$ such that $\pi(M')=M$. We assume without loss of
generality that $N=0$. By Lemma~\ref{add}, each object of
$\mathcal{F}$ admits an $\add(\Gamma)$-resolution. Therefore, for
$N'$ in $\mathcal{F}$, we have, as in Proposition 2.1 c) of
\cite{KellerReiten08}, the isomorphism
\[
\Hom_{\mathcal{C}_{R Q}/\add (\Sigma \Gamma)} (\pi M', \pi N') \cong
\Hom_{R Q} ( G M, G \pi N').
\]
Note also that $G(GM\otimes^L_{RQ} \Gamma)$ is isomorphic to $G M$
by Lemma~\ref{G}. Let
$$\Sigma^{-1} M\to  P_1 \stackrel{h} \to P_0 \to M $$ be an
$\add(\Gamma)$-resolution in the integral cluster category. Then all
morphisms from $ P_1$ to $M$ factor through $h$. Applying $G$ to the
triangle gives the start of a projective resolution $$GP_1 \to GP_0
\to GM \to 0.$$ As $P_1$, $P_0$ and $M$ are all images of objects in
$\mathcal{F}$, we have that every morphism from $GP_1$ to $GM$
factors through $Gh$. Therefore $GM$ is rigid as an $R Q$-module and
hence is a lattice. If $GM$ vanishes, then $M$ lies in
$\add(\Sigma\Gamma)$. Since we have equivalences
\[
\add(RQ) \iso \add(\Gamma) \iso \add(\pi(\Gamma)) \iso \add(RQ) \ko
\]
we obtain $M \cong G(\Sigma^{-1}M)\ten_{RQ}^L\Sigma \Gamma  $. So
let us suppose that $GM$ does not vanish. As $G M$ is a lattice, the
object $GM\otimes^L_{R Q} \Gamma$ lies in $\mathcal{F}$. It follows
that there are isomorphisms
\[
f \in \Hom_{\mathcal{C}_{R Q}/\add (\Sigma \Gamma)} ( M,
GM\otimes^L_{R Q} \Gamma) \mbox{  and } g  \in \Hom_ {\mathcal{C}_{R
Q}/\add (\Sigma \Gamma)} ( GM\otimes^L_{R Q} \Gamma, M).
\]
We lift $f$ and $g$ to morphisms $\tilde{f}$ and $\tilde{g}$ in the integral cluster
category. Then $\tilde{f} \tilde{g}$ lies in
\[
\Hom_{\mathcal{C}_{R Q}}( GM\otimes^L_{R Q} \Gamma,GM\otimes^L_{R Q}
\Gamma) \cong \Hom_{\mathcal{C}^{orb}}(G M, GM).
\]
Now the functor $\Hom_{\cc^{orb}}(RQ,?)$ induces a surjective
ring homomorphism
\[
\Hom_{\cc^{orb}}(GM,GM) \to \Hom_{RQ}(GM,GM)
\]
whose kernel is a radical ideal. Since $fg$ is an isomorphism of
$RQ$-modules, $\tilde{f}\tilde{g}$ is an isomorphism in the integral
cluster category. But $M$ is indecomposable and $\tilde{f}\tilde{g}$
factors through $M$, hence the objects  $M$  and $ GM\otimes^L_{R Q}
\Gamma$ are isomorphic.
\end{proof}
Note that the proof of the previous theorem also holds if
$G(\Sigma^N M)$ is a non vanishing lattice. Therefore we have
\begin{lemma}
Let $M$ be an indecomposable object in $\mathcal{C}_{RQ}$ such that
there is a $Z\in \mathcal{F}$ with $\pi(Z) $ is isomorphic to $M$.
If $GM$ is a non vanishing lattice, then $M$ is isomorphic to
$GM\otimes^L_{RQ}\Gamma$.
\end{lemma}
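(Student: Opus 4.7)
The plan is to mirror the argument of Theorem~\ref{rigid} entering it at precisely the point where that proof has just established that $GM$ is a non-vanishing lattice---which is now our hypothesis rather than a consequence of rigidity. Since $Z \in \mathcal{F}$ with $\pi(Z) \cong M$, we work with $Z$ in place of $M'$ and $N=0$ in the notation of Theorem~\ref{rigid}. By Lemma~\ref{add}, $Z$ admits an $\add(\Gamma)$-resolution, so as in Proposition~2.1~c) of \cite{KellerReiten08} we obtain, for every $N' \in \mathcal{F}$, a natural isomorphism
\[
\Hom_{\mathcal{C}_{RQ}/\add(\Sigma\Gamma)}(M,\pi N') \cong \Hom_{RQ}(GM,G\pi N').
\]
The symmetric version---with $M$ on the right---holds because $GM \otimes^L_{RQ} \Gamma$ itself admits an $\add(\Gamma)$-resolution.

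Next, by Lemma~\ref{G}, the fact that $GM$ is a non-vanishing lattice implies that $GM \otimes^L_{RQ} \Gamma$ lies in $\mathcal{F}$ and that $G(GM \otimes^L_{RQ} \Gamma) \cong GM$. Applying the displayed isomorphism (and its symmetric variant) with $\pi N' = GM \otimes^L_{RQ} \Gamma$ and $\pi N' = M$ respectively, the identity of $GM$ lifts to morphisms $f \colon M \to GM \otimes^L_{RQ} \Gamma$ and $g \colon GM \otimes^L_{RQ} \Gamma \to M$ in $\mathcal{C}_{RQ}/\add(\Sigma\Gamma)$ with $fg$ and $gf$ corresponding to $\id_{GM}$ on the $RQ$-side, hence both equal to the identity.

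Lift $f$ and $g$ to $\tilde{f}, \tilde{g}$ in $\mathcal{C}_{RQ}$. Then
\[
\tilde{f}\tilde{g} \in \Hom_{\mathcal{C}_{RQ}}(GM \otimes^L_{RQ} \Gamma,\, GM \otimes^L_{RQ} \Gamma) \cong \Hom_{\mathcal{C}^{orb}}(GM,\, GM)
\]
by Theorem~\ref{faithful}. The natural ring homomorphism to $\Hom_{RQ}(GM,GM)$ is surjective with radical kernel (the same argument as in Theorem~\ref{rigid}); since $\tilde{f}\tilde{g}$ reduces modulo $\add(\Sigma\Gamma)$ to $\id_{GM}$, it is an isomorphism in $\mathcal{C}_{RQ}$. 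Hence $GM \otimes^L_{RQ} \Gamma$ is a direct summand of $M$, and since $M$ is indecomposable while $GM \otimes^L_{RQ} \Gamma$ is nonzero---its image under $G$ being the non-vanishing lattice $GM$---we conclude $M \cong GM \otimes^L_{RQ} \Gamma$.

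The only real obstacle is conceptual rather than technical: one must verify that the concluding portion of the proof of Theorem~\ref{rigid} uses nothing more than the non-vanishing of the lattice $GM$ together with the indecomposability of $M$. The hypothesis $GM \neq 0$ here plays exactly the role occupied in the original proof by the dichotomy between $M \in \add(\Sigma\Gamma)$ and $M \notin \add(\Sigma\Gamma)$, selecting the branch in which the module-theoretic comparison applies.
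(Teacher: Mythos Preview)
Your proposal is correct and follows exactly the approach the paper takes: the paper's proof of this lemma is simply the remark that ``the proof of the previous theorem also holds if $G(\Sigma^N M)$ is a non vanishing lattice,'' and you have faithfully unpacked precisely that observation. Your final paragraph correctly identifies the key point, namely that the concluding portion of the proof of Theorem~\ref{rigid} uses only indecomposability of $M$ and the lattice property of $GM$, not rigidity.
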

The next result is well-known for derived categories of hereditary algebras over fields.
\begin{lemma}\label{S} Let $R$ be a principal ideal domain.
The Serre functor $S_R$ of $\per (RQ)$ maps shifts of rigid lattices
to shifts of rigid lattices.
\end{lemma}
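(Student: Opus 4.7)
The plan is to reduce by additivity of $S_R$ to the case of an indecomposable rigid lattice $M$; if $M$ is $RQ$-projective then $S_RM$ is the corresponding indecomposable injective lattice, which is rigid because $S_R$ is an autoequivalence of $\per(RQ)$ by Proposition~\ref{calabi}. So assume $M$ is indecomposable and non-projective. Every $RQ$-lattice has $RQ$-projective dimension at most one (for instance, the standard Koszul-type resolution attached to the quiver structure is exact and its terms are $RQ$-projective because each $e_iM$ is free over $R$), so fix a projective resolution $0\to P_1\to P_0\to M\to 0$. Then $S_RM$ is represented by the two-term complex $[\nu P_1\to \nu P_0]$ in degrees $-1,0$, where $\nu=(-)\otimes_{RQ}\Hom_R(RQ,R)$, and the task is to show that this complex is concentrated in degree $-1$ and has $R$-free cohomology.

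Applying $\Hom_{RQ}(-,RQ)$ to the resolution yields
\[
0\to \Hom_{RQ}(M,RQ)\to P_0^\ast\to P_1^\ast\to \Ext^1_{RQ}(M,RQ)\to 0
\]
with $P_i^\ast=\Hom_{RQ}(P_i,RQ)$ an $R$-lattice. The first term vanishes: it is $R$-torsion-free (as a submodule of the $R$-free $\Hom_R(M,RQ)$) and, tensored with $K=\mathrm{Frac}(R)$, identifies with $\Hom_{KQ}(M_K,KQ)=0$, the latter by the standard argument that a non-projective indecomposable over the hereditary algebra $KQ$ has no non-zero map to $KQ$ (any non-zero image is a projective sublattice of $KQ$ and would split off, contradicting indecomposability). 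The heart of the argument is to show that $C:=\Ext^1_{RQ}(M,RQ)$ is again an $R$-lattice: by the short exact sequence $0\to P_0^\ast\to P_1^\ast\to C\to 0$, this is equivalent to the injectivity of $P_0^\ast\otimes_R k\to P_1^\ast\otimes_R k$ for every residue field $k=R/\mathfrak{m}$, and flat base change identifies the kernel with $\Hom_{kQ}(M/\mathfrak{m}M,kQ)$.

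To make this Hom vanish, I would first show that $M/\mathfrak{m}M$ is an indecomposable non-projective $kQ$-module. Rigidity of $M$ together with $\Ext^2_{RQ}(M,M)=0$ (from the length-one resolution) and the long exact sequence of $\Hom_{RQ}(M,-)$ applied to $0\to M\xrightarrow{p}M\to M/\mathfrak{m}M\to 0$ give $\Ext^1_{kQ}(M/\mathfrak{m}M,M/\mathfrak{m}M)=0$ and $\End_{kQ}(M/\mathfrak{m}M)\cong \End_{RQ}(M)/\mathfrak{m}\End_{RQ}(M)$. Since $M_K$ is a brick (its rank vector is a real Schur root, by Crawley--Boevey), $\End_{RQ}(M)$ is a finite $R$-subalgebra of $K$, hence equals $R$ because the PID $R$ is integrally closed in $K$; consequently $\End_{kQ}(M/\mathfrak{m}M)=k$ and $M/\mathfrak{m}M$ is indecomposable. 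If it were $kQ$-projective, it would equal some indecomposable $P_i^{kQ}$, and Crawley--Boevey's uniqueness of the rigid indecomposable $RQ$-lattice with given rank vector would force $M\cong P_i^{RQ}$, contradicting non-projectivity. Hence $M/\mathfrak{m}M$ is indecomposable non-projective over the hereditary $kQ$, and the argument used over $R$ gives $\Hom_{kQ}(M/\mathfrak{m}M,kQ)=0$.

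Applying $\Hom_R(-,R)$ to the now-exact sequence of $R$-lattices $0\to P_0^\ast\to P_1^\ast\to C\to 0$ yields $0\to \Hom_R(C,R)\to \nu P_1\to \nu P_0\to 0$, exhibiting $S_RM\cong \Hom_R(C,R)[1]$ as a shift of an $R$-lattice; rigidity follows from $\Ext^1_{RQ}(S_RM,S_RM)\cong \Ext^1_{RQ}(M,M)=0$. The main obstacle is the torsion-freeness of $C$, which is precisely where the PID hypothesis enters the argument, via the identification $\End_{RQ}(M)=R$ and the uniqueness part of Crawley--Boevey's classification.
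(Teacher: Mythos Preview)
Your argument is correct and follows the same overall strategy as the paper: reduce to an indecomposable non-projective rigid lattice $M$, represent $S_RM$ by the two-term complex $\nu P_1\to\nu P_0$ coming from a length-one projective resolution, and verify by reduction to residue fields that this complex is concentrated in degree $-1$ with $R$-free cohomology. The difference is in how the reduction is organized.

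The paper works directly with the Nakayama functor: it observes $H^0(S_RM)=M\otimes_{RQ}\Theta$, and shows this vanishes by checking that $(M\otimes_{RQ}\Theta)\otimes_R\F=0$ for every residue field $\F$, using that $S_R$ commutes with $-\otimes_R^L\F$ and that $S_\F$ shifts non-projective indecomposables. The indecomposability and non-projectivity of $M\otimes_R\F$ are taken as a black box from Crawley--Boevey's Theorem~2. Once $H^0=0$, the degree~$-1$ cohomology is automatically a lattice because it sits inside the lattice $\nu P_1$ (using only that $R$ is a PID). You instead factor $\nu$ as $\Hom_R(-,R)\circ\Hom_{RQ}(-,RQ)$, which forces you to verify two separate vanishings: $\Hom_{RQ}(M,RQ)=0$ at the generic point and torsion-freeness of $\Ext^1_{RQ}(M,RQ)$ at closed points. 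You also reprove the indecomposability of $M/\mathfrak{m}M$ from scratch via the nice observation $\End_{RQ}(M)=R$ (using integral closedness of the PID). This is more self-contained but longer; the paper's route is shorter precisely because the direct vanishing of $H^0$ makes the lattice property of $H^{-1}$ automatic.
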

\begin{proof} The Serre functor is given by
the left derived functor of tensoring with the bimodule $\Theta=
\Hom_{R} (R Q, R)$. As $\F Q$ is hereditary, the Serre functor
$S_{\F}$ maps a non-projective module $L$ to $\Sigma\tau L$, where
$\tau$ denotes the Auslander-Reiten translation of the category of
$\F Q$-modules. Hence the Serre functor applied to non projective
indecomposable $\F Q$-modules has non-vanishing homology only in
degree minus one. Moreover, the functor $S_{\F}$ maps projective
modules to injective modules.

The statement is clear for projective lattices of $RQ$. Let $M$ be a
non-projective indecomposable rigid lattice over $RQ$ and
\[
0 \to P_1 \stackrel{f} \to P_0 \to M \to 0
\]
a projective resolution of
$M$. Then $P_0$ and $P_1$ are lattices and $f$ splits as a map of
$R$-modules. The object $S_R(M)$ is isomorphic to the complex
$$\cdots \to 0 \to P_1 \otimes_{RQ} \Theta \stackrel{f \otimes
\Theta}\to P_0 \otimes_{RQ} \Theta \to 0 \to \cdots$$ As $\Theta$ is
a lattice, so are $P_0\otimes_{RQ} \Theta$ and $P_1 \otimes_{RQ}
\Theta$. The cokernel of $f \otimes \Theta$ is given by $M
\otimes_{RQ} \Theta$. We show next that $f\otimes \Theta$ is surjective
by proving that $M\otimes_{RQ} \Theta$ vanishes. By the proof of
\ref{scalar extension}, $S_R$ commutes with the functor
$-\otimes^L_R \F$. Therefore $(\F\otimes_R  \Theta)\otimes_{ \F Q}
(M\otimes_R \F )$ and $ (\Theta\otimes_{RQ} M)\otimes_{R} \F$ are
isomorphic. By \cite[Theorem 2]{CrawleyBoevey96} the module
$M\otimes_R \F$ is a rigid indecomposable non-vanishing lattice
which is non-projective. Suppose that $M\otimes_{RQ} \Theta$ does
not vanish. Then there is a field $\F$ such that
$(\Theta\otimes_{RQ} M)\otimes_{R} \F$ does not vanish. Hence
$S_{\F}( \F \otimes_R M) $ has non-vanishing cohomology in degree
zero, which is a contradiction, as $\F\otimes_R M$ is a
non-projective $\F Q$-module.  Therefore $f\otimes \Theta$ is
surjective. Its kernel has to be a lattice as it is a submodule of a
lattice. The object $S_R(M)$ is isomorphic to the one-shift of this
lattice.
\end{proof}

Next we analyze the relationship between the integral cluster
category and the cluster category over the field $\F$. We can strengthen \cite[A.8]{Keller10b}.
\begin{theo}\label{rigid1} Let $R$ be a principal ideal domain.
(1) The rigid indecomposable objects of $\cc_{RQ}$ are lattice-like.

(2) The reduction functor $$?\otimes_{R} \F: \mathcal{C}_{R Q} \to
\mathcal{C}_{\F Q}$$ induces a bijection from the set of isomorphism
classes of rigid indecomposable objects in $\mathcal{C}_{R Q} $ to
the set of isomorphism classes of rigid indecomposable objects of
$\mathcal{C}_{\F Q}$.
\end{theo}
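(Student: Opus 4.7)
The strategy is to combine Theorem~\ref{rigid} with scalar extension to a residue field, where the analogous statement is classical, and to lift the conclusion back using Lemma~\ref{S} and Crawley-Boevey's classification.

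For part~(1), let $M$ be a rigid indecomposable object of $\cc_{RQ}$. Theorem~\ref{rigid} yields $N \in \Z$ with $M \cong \Sigma^{-N}(L \otimes^L_{RQ} \Gamma)$, where $L = G(\Sigma^N M)$. If this module vanishes for every $N$, the same theorem already places $M$ in $\add(\Sigma\Gamma)$, which is lattice-like. Otherwise $L$ is a non-zero rigid $RQ$-module, hence by Crawley-Boevey's Theorem~1 a rigid indecomposable lattice. Under the fully faithful embedding of Theorem~\ref{faithful}, $M$ is the image of $\Sigma^{-N}L \in \per(RQ)$, so it suffices to show that in $\cc^{orb}$ the object $\Sigma^{-N}L$ is isomorphic either to some rigid indecomposable lattice or to $\Sigma P$ for some indecomposable projective $P$.

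To verify this I would reduce modulo a prime. For a residue field $\F$ of $R$, Corollary~\ref{scalar extension} gives $M \otimes_R \F \cong \Sigma^{-N}(L_\F \otimes^L_{\F Q} \F\Gamma)$ in $\cc_{\F Q}$, where $L_\F := L \otimes_R \F$ is a rigid indecomposable $\F Q$-module by Crawley-Boevey's Theorem~2; consequently $M \otimes_R \F$ is rigid indecomposable in $\cc_{\F Q}$. The classical field-case description of rigid indecomposables (Buan-Marsh-Reineke-Reiten-Todorov) then asserts that $M \otimes_R \F$ is lattice-like, so there exist $k \in \Z$ and either a rigid indecomposable $\F Q$-module $N_\F$ with $(S_\F\Sigma^{-2})^k(\Sigma^{-N}L_\F) \cong N_\F$ in $\per(\F Q)$, or an indecomposable projective $P_\F$ with $(S_\F\Sigma^{-2})^k(\Sigma^{-N}L_\F) \cong \Sigma P_\F$. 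By Lemma~\ref{S}, $S_R^k L \cong \Sigma^{a_k} L^{(k)}$ for a rigid indecomposable lattice $L^{(k)}$ and an integer $a_k$, and since the Serre functor commutes with $?\otimes_R \F$ by Corollary~\ref{scalar extension}, we have $S_\F^k L_\F \cong \Sigma^{a_k}(L^{(k)} \otimes_R \F)$ with the same $a_k$. Matching the two orbit computations and applying Crawley-Boevey's Theorem~2 once more identifies $L^{(k)}$ with the rigid indecomposable lattice (resp.~indecomposable projective) whose reduction is $N_\F$ (resp.~$P_\F$) and forces the shift exponents to agree, yielding the desired isomorphism in $\cc^{orb}$.

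Part~(2) then follows by bookkeeping. By~(1) and Crawley-Boevey's Theorems~1 and~2, the isomorphism classes of rigid indecomposables of $\cc_{RQ}$ are parametrised by the disjoint union of the real Schur roots of $Q$ and the vertices of $Q$, and the same parametrisation holds for $\cc_{\F Q}$. The reduction functor sends $L \otimes^L_{RQ} \Gamma$ to $L_\F \otimes^L_{\F Q} \F\Gamma$ and $\Sigma\Gamma \otimes^L_{RQ} P$ to $\Sigma \F\Gamma \otimes^L_{\F Q} P_\F$, preserving these labels and hence inducing the required bijection. The main obstacle is the orbit-matching step in the third paragraph: the identification of $L^{(k)}$ through its reduction requires Crawley-Boevey's Theorem~2 to exclude ambiguity, and the agreement of the shift exponents must be traced using the explicit compatibility in Corollary~\ref{scalar extension}. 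Both are technical checks, rather than structural difficulties.
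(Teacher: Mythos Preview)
Your proposal follows essentially the same approach as the paper: use Theorem~\ref{rigid} to realise $M$ as a shift of the image of a rigid lattice, reduce modulo $\F$ and invoke the field-case classification to locate the reduction in an $F$-orbit of a module or a shifted projective, then apply Lemma~\ref{S} together with the compatibility of Corollary~\ref{scalar extension} to lift this orbit relation back to $\per(RQ)$ and match the shift exponent, concluding lattice-likeness; part~(2) then follows immediately from Crawley-Boevey's classification. The paper's argument is structurally identical, with only cosmetic differences in notation and ordering.
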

\begin{proof}
We denote by $F_R$ the functor $S_R\Sigma^{-2}$ in $\per (RQ)$ and by
$F$ the functor $S\Sigma^{-2}$ in $\per (\F Q)$. Let $M \in
\mathcal{C}_{R Q}$ be a rigid indecomposable object. By \ref{rigid}
and \ref{faithful}, we can view $M$ as an object of
$\mathcal{C}^{orb}$ and $M$ is isomorphic to the $N$-shift  of the
image of a rigid $R Q$-lattice $M'$. Then $M\otimes^L_{R} \F$ is
isomorphic to the $N$-shift of the rigid module $M'\otimes_{R} \F$
seen as an object in $ \mathcal{C}_{\F Q}$.  If we view $\Sigma^{N}
M' \otimes^L_{R} \F$ as an object in $\per (\F Q)$, we see that there
is a rigid indecomposable module $L$ in $\mod \F Q$ or  an
indecomposable direct summand $P$ of $ \F Q$ such that $\Sigma^{N}
M'\otimes_{R} \F$ and $L$ lie in the same $F$-orbit or $\Sigma^{N}
M' \otimes^L_{R} \F$ and $\Sigma P$ lie in the same $F$-orbit.  Let
$n\in \Z$ be such that $ L \cong F^n\Sigma^{-N} M'\otimes_{R} \F$ or
$ \Sigma P \cong F^n \Sigma^{-N} M'\otimes_{R} \F$.

As $S_{R}$ maps the shift of a rigid lattice to the shift of a rigid
lattice by \ref{S}, we have that $F_R^n \Sigma^N M' $ is also the
$k$-shift of a rigid $R Q$-lattice, say $L'$ in $\per (RQ)$ for some
$k\in \Z$. By \ref{scalar extension}, we have that $\Sigma^ k L'
\otimes \F$ and $L$ are isomorphic or $\Sigma^k L' \otimes_R F \cong
\Sigma P$ in $\per (\F Q)$, hence $k$ vanishes in the first case and
$k$ equals one in the second case. Furthermore, in the second case
$L'$ is isomorphic to a projective $R Q$-module. We obtain therefore
that $\Sigma^{N} M'$ is in the $F_R $-orbit of $L'$ in the first
case and is in the $F_R$-orbit of $\Sigma L'$ in the second case.
Hence in the orbit category, we have that $M$ is isomorphic to a
lattice or to the one-shift of a projective lattice. This finishes
the proof of the first statement. Using Theorem~1 of
\cite{CrawleyBoevey96} we then immediately obtain the second
statement.
\end{proof}
Note also that all rigid objects satisfy the unique decomposition property,
as they are lattice-like and the statement holds by
\cite[Theorem 2]{CrawleyBoevey96} for rigid lattices in the category of $RQ$-modules.

We can also show that the orbit category $\mathcal{C}^{orb}$ and the
integral cluster category coincide and hence the orbit category is
triangulated.

\begin{theo}\label{triangulated}
The embedding of the orbit category $\mathcal{C}^{orb}$ into
$\mathcal{C}_{RQ}$ is an equivalence. Therefore the orbit category
$\mathcal{C}^{orb}$ is canonically triangulated.
\end{theo}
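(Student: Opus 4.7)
The plan is to reduce essential surjectivity to a concrete cone computation in $\per(\Gamma)$ by exploiting the cohomological concentration of $\Theta$. Since Theorem~\ref{faithful} already gives full faithfulness, only essential surjectivity remains. Given $Z \in \mathcal{C}_{RQ}$, I apply Theorem~\ref{fundamental} to write $Z \cong \pi(Y)$ with $Y$ in some shift of the fundamental domain $\mathcal{F}$. Since $\Sigma$ commutes with $? \otimes_{RQ}^L \Gamma$, the essential image of $\mathcal{C}^{orb}$ is stable under suspension, so I may assume $Y \in \mathcal{F}$. Lemma~\ref{add} then supplies an $\add(\Gamma)$-resolution
\[
P_1 \xrightarrow{h} P_0 \to Y \to \Sigma P_1
\]
in $\per(\Gamma)$, and via the equivalence $\add(RQ) \iso \add(\Gamma)$ used in the proof of Theorem~\ref{rigid} I can write $P_i = M_i \otimes_{RQ}^L \Gamma$ for finitely generated projective $RQ$-modules $M_0, M_1$.

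The crux of the argument is to lift $h$ to a morphism $h': M_1 \to M_0$ in $\per(RQ)$ with $h = h' \otimes_{RQ}^L \Gamma$. Combining the $(? \otimes_{RQ}^L \Gamma, \text{restriction})$ adjunction with the decomposition $\Gamma = \bigoplus_{l \ge 0} \Theta^{\otimes_{RQ} l}$ of the tensor algebra, I obtain
\[
\Hom_{\per(\Gamma)}(P_1, P_0) \cong \bigoplus_{l \ge 0}\Hom_{D(RQ)}\bigl(M_1, F_R^l M_0\bigr),
\]
where $F_R = ? \otimes_{RQ}^L \Theta = S_R \Sigma^{-2}$. I claim every summand with $l \ge 1$ vanishes. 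Setting $\omega = \Hom_R(RQ,R)$ and using that $R$ hereditary forces $\operatorname{gl.dim}(RQ) \le 2$, an inductive amplitude argument via the spectral sequence for $? \otimes_{RQ}^L \omega$ shows that $S_R^l M_0$ is concentrated in cohomological degrees $[-2(l-1),\,0]$: the top degree is preserved at zero, and each iteration can drop the bottom by at most two. Hence $F_R^l M_0 = \Sigma^{-2l} S_R^l M_0$ satisfies $H^0(F_R^l M_0) = 0$ for $l \ge 1$, and since $M_1$ is a projective module one has $\Hom_{D(RQ)}(M_1, F_R^l M_0) = \Hom_{RQ}(M_1, H^0 F_R^l M_0) = 0$.

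Only the $l = 0$ summand $\Hom_{RQ}(M_1, M_0)$ therefore contributes, which produces the required $h'$. Since $? \otimes_{RQ}^L \Gamma$ is a triangle functor, applying it to the cone triangle $M_1 \xrightarrow{h'} M_0 \to \cone(h') \to \Sigma M_1$ in $\per(RQ)$ yields $Y \cong \cone(h) \cong \cone(h') \otimes_{RQ}^L \Gamma$, so $\pi(Y) \cong Z$ lies in the essential image. Combined with Theorem~\ref{faithful}, this gives the asserted equivalence, and $\mathcal{C}^{orb}$ inherits the triangulated structure of $\mathcal{C}_{RQ}$. The main obstacle I anticipate is the amplitude induction for $S_R^l M_0$; once this cohomological concentration is established, the rest is formal bookkeeping with the adjunction and the triangle functor.
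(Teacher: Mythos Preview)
Your proposal is correct and follows the same strategy as the paper: reduce via Theorem~\ref{fundamental} and Lemma~\ref{add} to lifting the map $h$ in an $\add(\Gamma)$-resolution back to $\per(RQ)$, then conclude using that $?\otimes^L_{RQ}\Gamma$ is a triangle functor. The only difference is that the paper justifies the lift by citing the full faithfulness of $\add(RQ)\to\per(\Gamma)$ already established before Lemma~\ref{G} (equivalently, $\Hom_{\per(\Gamma)}(\Gamma,\Gamma)=H^0(\Gamma)=RQ$ since the Ginzburg algebra is concentrated in non-positive degrees), whereas you derive the same vanishing via an explicit amplitude bound on $S_R^l M_0$---a little more work than necessary, but entirely correct.
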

\begin{proof}
We consider the commutative diagram of functors
\[
\xymatrix{ \per (RQ) \ar[r]^-{?\otimes^L_{RQ}  \Gamma} \ar[d] & \per (
\Gamma) \ar[d]_{\pi}
\\ \mathcal{C}^{orb}  \ar[r]_{? \otimes^L_{RQ}  \Gamma} & \mathcal{C}_{RQ} .}
\]
By \ref{embedding}, the bottom functor is fully faithful. Let us
show that it is essentially surjective. Let $M \in \cc_{RQ}$.
By~\ref{fundamental}, there is an $n \in \Z$ and an $M' \in
\mathcal{F}$ such that $\Sigma^n \pi M' \cong M$. We assume without
loss of generality that $n=0$ and chose an $\add(\Gamma)$-resolution
$P_1 \stackrel{h} \to P_0 \to M' \to \Sigma P_1.$ By
remark~\ref{c-orbit-morphisms}, the restriction of $\pi$ to
$\add(\Gamma)$ is fully faithful and so is the restriction of
$\per(RQ) \to \per(\Gamma)$ to $\add(RQ)$. Thus, the morphism $h:P_1
\to P_0$ is the image of a morphism in $\per (RQ)$. Since
$-\otimes^L_{RQ} \Gamma: \per (RQ) \to \per (\Gamma)$ is a triangle
functor, $M'$ is also isomorphic to an image of an object in $\per
RQ$. By the commutativity of the diagram, we deduce that $M$, as an
object in $\mathcal{C}_{R Q}$, is isomorphic to the image of an
object in $\mathcal{C}^{orb}$. Now the objects in $\mathcal{
C}_{RQ}$ are identical with the objects in $\per (\Gamma)$, hence
$?\otimes^L_{RQ} \Gamma: \mathcal{C}^{orb}\to \mathcal{C}_{R Q}$ is
essentially surjective and hence an equivalence.
\end{proof}

\begin{cor}\label{2cy}
 The integral cluster category satisfies the relative
2-Calabi-Yau property, i.e. $X$ and $Y \in \mathcal{C}_{RQ}$, there
is a bifunctorial isomorphism
$$ \rhom_{R}(\rhom_{\mathcal{C}_{RQ}}( X,Y),R)\cong \rhom_{\mathcal{C}_{RQ}}(Y, \Sigma^2 X)$$
in $D(R)$.
\end{cor}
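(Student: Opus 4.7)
The plan is to reduce the 2-Calabi-Yau statement to the Serre duality of Proposition~\ref{calabi} via the orbit-category description furnished by Theorem~\ref{triangulated}. By that theorem every object of $\mathcal{C}_{RQ}$ has the form $Z_0 \otimes^L_{RQ} \Gamma$ for some $Z_0 \in \per(RQ)$, so I may write $X = X_0 \otimes^L_{RQ} \Gamma$ and $Y = Y_0 \otimes^L_{RQ} \Gamma$ with $X_0, Y_0$ perfect over $RQ$. Since $F = S_R\Sigma^{-2}$ is an auto-equivalence of $\per(RQ)$ by Proposition~\ref{calabi}, the combination of Theorem~\ref{embedding} with Remark~\ref{c-orbit-morphisms} yields
\[
\Hom_{\mathcal{C}_{RQ}}(X, \Sigma^i Y) \cong \bigoplus_{l\in\Z} \Hom_{D(RQ)}(X_0, F^l \Sigma^i Y_0)
\]
for every $i \in \Z$. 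Perfectness of $X_0$ and $Y_0$, together with the degree-shifting behaviour of $F$, guarantees that only finitely many $l$ contribute at each fixed $i$, so these Hom-level isomorphisms assemble into a canonical natural isomorphism
\[
\rhom_{\mathcal{C}_{RQ}}(X, Y) \cong \bigoplus_{l \in \Z} \rhom_{D(RQ)}(X_0, F^l Y_0)
\]
in $D(R)$.

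Next I apply Proposition~\ref{calabi} to each summand. Writing $F^l Y_0 = S_R(F^{l-1}\Sigma^{-2} Y_0)$ via the identity $S_R^{-1} F^l = F^{l-1}\Sigma^{-2}$, the proposition gives
\[
\rhom_R\bigl(\rhom_{RQ}(X_0, F^l Y_0), R\bigr) \cong \rhom_{RQ}(F^{l-1}\Sigma^{-2} Y_0, X_0),
\]
and because $F$ is an auto-equivalence of $\per(RQ)$, the right-hand side equals $\rhom_{RQ}(Y_0, \Sigma^2 F^{1-l} X_0) = \rhom_{RQ}(Y_0, F^{1-l}\Sigma^2 X_0)$. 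Applying $\rhom_R(-,R)$ termwise to the direct sum above -- permissible because only finitely many summands contribute to each cohomological degree -- and reindexing by $m = 1 - l$ produces
\[
\rhom_R\bigl(\rhom_{\mathcal{C}_{RQ}}(X,Y), R\bigr) \cong \bigoplus_{m\in\Z} \rhom_{RQ}(Y_0, F^m\Sigma^2 X_0).
\]
A second application of the orbit-category formula identifies the right-hand side with $\rhom_{\mathcal{C}_{RQ}}(Y, \Sigma^2 X)$, yielding the desired bifunctorial isomorphism.

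The main obstacle is to upgrade the Hom-level orbit formula of Theorem~\ref{embedding} and Remark~\ref{c-orbit-morphisms} to a genuine $\rhom$-level isomorphism in $D(R)$ and to verify naturality in both variables. The essential ingredient is the perfectness of $X_0$ and $Y_0$, which ensures that at every cohomological degree only finitely many $l$ contribute and hence that direct sum, direct product, and $\rhom_R(-, R)$ interchange freely; this same finiteness also makes the orbit colimit of Theorem~\ref{embedding} reduce to the effective direct sum over all of $\Z$ recorded in Remark~\ref{c-orbit-morphisms}.
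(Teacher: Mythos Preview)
The paper states this corollary without proof, placing it immediately after Theorem~\ref{triangulated}. Your approach---transporting Serre duality (Proposition~\ref{calabi}) through the orbit-category description via Theorem~\ref{triangulated} and Remark~\ref{c-orbit-morphisms}---is exactly the argument the placement of the corollary invites, and the termwise computation together with the reindexing $m=1-l$ is correct.

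The one point that deserves more care is the passage from degreewise Hom-isomorphisms to a bifunctorial isomorphism in $D(R)$. Knowing that $H^i$ of both sides agree for every $i$ is not by itself enough: over a hereditary ring any object of $D(R)$ splits as $\bigoplus_n \Sigma^{-n} H^n$, but this splitting is \emph{not} natural, so one cannot simply ``assemble'' the isomorphisms. What makes the argument go through is that the identification in Theorem~\ref{embedding} (and its proof) comes from an honest morphism of dg functors: the chain of adjunctions $\rhom_\Gamma(\Gamma^{>N},Y)\cong \rhom_A(F^{N+1}A,Y|_A)$ lives in $D(R)$, and the colimit over $N$ is a filtered colimit of complexes. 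Thus the orbit formula already holds at the $\rhom$ level, bifunctorially, before one ever takes cohomology; your finiteness observation then justifies commuting $\rhom_R(-,R)$ with the direct sum. If you make this explicit, the proof is complete. An alternative route, closer to Amiot's original argument and to Proposition~\ref{derivedcalabi}, would be to descend the relative $3$-Calabi--Yau property of $D(\Gamma)$ to the Verdier quotient via calculus of fractions; but the orbit approach you chose is cleaner here and is almost certainly what the authors had in mind.
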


\section{Cluster-tilting mutation}
Mutations of cluster-tilting objects have been defined for cluster
categories over fields in \cite{BuanMarshReinekeReitenTodorov06},
generalizing the mutations of tilting objects in hereditary
categories studied in \cite{HappelUnger05a}. The mutation of
rigid objects and cluster-tilting objects in the cluster category
is used in \cite{CalderoKeller06} to give an additive
categorification of the cluster algebra associated to the quiver
$Q$ and its exchange relations. We refer to \cite{BuanMarsh06}
 \cite{Keller09b} \cite{Keller10b} \cite{Reiten06} for overviews.

Using our classification of rigid objects in the integral cluster
category, we can generalize the results obtained in
\cite{BuanMarshReinekeReitenTodorov06}. Throughout this section,
we assume that $R$ is a principal ideal domain and we
fix a ring homomorphism from $R$ to a field $\F$. Let $Q$ be a
finite quiver without oriented cycles and let $n$ be the number of
its vertices.
\begin{defi}
A {\em cluster-tilting object} $T$ is a rigid object in $\cc_{RQ}$
such that $T$ has $n$ indecomposable direct summands which are
pairwise non-isomorphic. Let $T'$ be another cluster-tilting
object. The pair $(T,T')$ is called a {\em mutation pair} if $T$
and $T'$ have exactly $n-1$ isomorphic indecomposable summands
in common.
Then we say that $T'$ is connected to $T$ by a {\em cluster-tilting
mutation.}
\end{defi}
By Theorem \ref{rigid1} the results of
\cite{BuanMarshReinekeReitenTodorov06}, every rigid
indecomposable object appears as a direct summand of a
cluster-tilting object. Moreover, the functor $? \otimes_R \F$
induces a bijection from the set of isomorphism classes of
cluster-tilting objects of $\mathcal{C}_{RQ}$ onto that of
$\cc_{\F Q}$ and this bijection preserves mutation pairs.

\begin{lemma}\label{ext}
If $X$ and $Y$ are rigid objects in $\mathcal{C}_{RQ}$, then
$\Ext_{\cc_{RQ}}^1(X,Y)$ is a free $R$-module and $? \otimes_R \F$
induces a bijection between $ \Ext_{\cc_{RQ}}^1(X,Y)$ and $
\Ext_{\cc_{\F Q}}^1( \F \otimes_R X,\F \otimes_R Y)$.
\end{lemma}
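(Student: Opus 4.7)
The plan is to use the equivalence $\cc^{orb}\cong\cc_{RQ}$ of Theorem~\ref{triangulated} together with the lattice-like structure of rigid objects (Theorem~\ref{rigid1}) to reduce the computation of $\Ext^1_{\cc_{RQ}}(X,Y)$ to $\Ext$-groups between rigid $RQ$-lattices in $D(RQ)$; freeness and base change will then follow from derived base change combined with Crawley-Boevey's combinatorial classification of rigid indecomposables.

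More precisely, write $X=\pi X_0$ and $Y=\pi Y_0$ with $X_0,Y_0\in\per(RQ)$ shifts of rigid lattices (possibly projective). Applying Remark~\ref{c-orbit-morphisms} to the autoequivalence $F_R=S_R\Sigma^{-2}$, we have
\[
\Ext^1_{\cc_{RQ}}(X,Y) \;\cong\; \bigoplus_{l\in\Z}\Hom_{D(RQ)}(X_0,\Sigma F_R^l Y_0).
\]
By Lemma~\ref{S} each $F_R^lY_0$ is again a shift of a rigid lattice; since rigid $RQ$-lattices have projective dimension at most one over $RQ$ (a fact implicit in the proof of Lemma~\ref{S}, since $R$ is a PID and $Q$ is acyclic), only finitely many indices $l$ contribute, and each summand is a $\Hom_{RQ}$ or $\Ext^1_{RQ}$ between rigid lattices.

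For rigid lattices $A$ and $B$, $\Hom_{RQ}(A,B)$ embeds in the free $R$-module $\Hom_R(A,B)$ and is therefore itself free over the PID $R$. For $\Ext^1_{RQ}(A,B)$, I would use derived base change: since $A$ is perfect, $\rhom_{RQ}(A,B)\otimes^L_R\F\cong\rhom_{\F Q}(\F A,\F B)$, and since $R$ is hereditary the universal coefficient theorem gives
\[
\Hom_{\F Q}(\F A,\F B) \cong \Hom_{RQ}(A,B)\otimes_R\F \,\oplus\, \operatorname{Tor}_1^R(\Ext^1_{RQ}(A,B),\F).
\]
By Theorems~1 and~2 of \cite{CrawleyBoevey96}, rigid indecomposable lattices over $\F Q$ are classified by real Schur roots and the dimensions of Hom-spaces between them depend only on these combinatorial data; thus the left-hand side has the same $\F$-dimension for $\F=\operatorname{Frac}(R)$ (where the Tor correction trivially vanishes) and for every residue field of $R$. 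This forces $\operatorname{Tor}_1^R(\Ext^1_{RQ}(A,B),\F)=0$ for every residue field, whence $\Ext^1_{RQ}(A,B)$ has no $p$-torsion at any prime, and is $R$-free.

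Putting things together, $\Ext^1_{\cc_{RQ}}(X,Y)$ is a finite direct sum of free $R$-modules and is therefore free. For the base change isomorphism, the freeness just established makes the UCT into the termwise isomorphism $\Ext^k_{\F Q}(\F A,\F B)\cong\Ext^k_{RQ}(A,B)\otimes_R\F$ for $k\in\{0,1\}$; summing over $l$ and using Corollary~\ref{scalar extension} to identify $\F\otimes^L_R F_R^l Y_0\cong F_\F^l(\F Y_0)$ yields the required $\Ext^1_{\cc_{RQ}}(X,Y)\otimes_R\F\iso\Ext^1_{\cc_{\F Q}}(\F X,\F Y)$. The main obstacle I expect is the freeness of $\Ext^1_{RQ}(A,B)$ between rigid lattices; it rests on the field-independence of Hom-dimensions between rigid indecomposable representations of $\F Q$, which is supplied by Crawley-Boevey's classification.
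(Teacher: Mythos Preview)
Your argument is correct, but it takes a noticeably different route from the paper's. The paper avoids the full orbit-sum description entirely: for lattices $X$ and $Y$ it invokes the relative $2$-Calabi--Yau property (Corollary~\ref{2cy}) to obtain the two-term formula
\[
\Ext^1_{\cc_{RQ}}(X,Y)\;\cong\;\Ext^1_{RQ}(X,Y)\;\oplus\;\Hom_R\!\bigl(\Ext^1_{RQ}(Y,X),R\bigr),
\]
and then cites \cite[Theorem~1]{CrawleyBoevey96} \emph{directly} for the freeness of $\Ext^1_{RQ}$ between rigid lattices (and similarly for its behaviour under $?\otimes_R\F$). The case $Y\cong\Sigma P$ reduces to $\Hom_{RQ}(X,P)$, which is free for the same reason. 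So the paper's proof is essentially two lines once Corollary~\ref{2cy} is available.

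By contrast, you expand $\Ext^1_{\cc_{RQ}}(X,Y)$ as the orbit sum via Theorem~\ref{triangulated} and Remark~\ref{c-orbit-morphisms}, then use Lemma~\ref{S} to identify each contributing term as a $\Hom_{RQ}$ or $\Ext^1_{RQ}$ between rigid lattices, and finally \emph{re-derive} the freeness of $\Ext^1_{RQ}(A,B)$ via derived base change and a Tor-vanishing argument resting on the field-independence of $\dim_\F\Hom_{\F Q}(\F A,\F B)$. This is valid---the field-independence does follow from the real-Schur-root classification---but it is more labour than necessary, since Crawley-Boevey's Theorem~1 already asserts that $\Ext^1_{RQ}(A,B)$ is free of combinatorially determined rank. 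One small point you gloss over: to handle $l<0$ in the orbit sum you implicitly need $F_R^{-1}$ (not just $F_R$) to preserve shifts of rigid lattices; this follows by rewriting those terms as $\Hom_{D(RQ)}(F_R^{|l|}X_0,\Sigma Y_0)$ using that $F_R$ is an autoequivalence, so Lemma~\ref{S} applied to $X_0$ suffices. What your approach buys is independence from the $2$-Calabi--Yau machinery; what the paper's buys is brevity and a clean identification of exactly which two $\Ext^1_{RQ}$-groups govern the answer.
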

\begin{proof}
Let first $X$ and $Y$ be two rigid $RQ$ lattices. By \cite[Theorem
1]{CrawleyBoevey96}, the $R$-module $\Ext_{RQ}^1(X,Y)$ is free.
By applying \ref{2cy} we obtain that the $R$-module
$\Ext^1_{\mathcal{C}_{R Q}}(X, Y)$ is isomorphic to 
\[
\Ext^1_{RQ}(X, Y) \oplus \Hom_R(\Ext^1_{RQ}(Y,X), R),
\]
and hence is free.
If we apply $\F \otimes_R ?$, we obtain, again
by \cite[Theorem 1]{CrawleyBoevey96}, that it is isomorphic to
\[
\Ext^1_{\F Q}(\F \otimes_R X, \F \otimes_R Y) \oplus 
\Hom_{\F Q}(\Ext^1_{\F Q}(\F \otimes_R Y,\F \otimes_R X), \F),
\] 
which is isomorphic to $\Ext^1_{\cc_{\F Q}}(X,Y)$. If $Y \cong \Sigma P$
for some projective $RQ$-module $P$, then $\Ext^1_{\cc_{RQ}}(X
\ten_R \F,Y\ten_R \F)$ is isomorphic to $\Hom_{RQ} (X,P)$ which is
also a free $R$-module by \cite[Theorem 1]{CrawleyBoevey96}. The
rest of the proof is analogous.
\end{proof}
\begin{theo}[Cluster tilting mutation]
Let $T$ be a cluster tilting object of $\cc_{RQ}$ and $X$ an indecomposable
direct summand of $T$ with complement $X'$. Let $Y$ be an indecomposable rigid object. Then $T':=Y\oplus X'$ is a cluster-tilting object if and only if  
$\Ext^1_{\mathcal{C}_{R Q}}(X, Y)$ has rank one.
\end{theo}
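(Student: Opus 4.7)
The plan is to reduce to the cluster-tilting mutation theorem of \cite{BuanMarshReinekeReitenTodorov06} over the field $\F$, exploiting the bijection of cluster-tilting objects induced by $?\otimes_R \F$ (stated in the paragraph preceding the theorem) together with the $\Ext$-comparison provided by Lemma~\ref{ext}. Both directions of the equivalence will amount to transporting the corresponding statement over $\F$, where the classical result applies.

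For the forward direction, suppose $T'$ is cluster-tilting in $\cc_{RQ}$. Then $?\otimes_R \F$ sends $T$ and $T'$ to cluster-tilting objects in $\cc_{\F Q}$ sharing $n-1$ indecomposable summands, so they form a mutation pair in the sense of \cite{BuanMarshReinekeReitenTodorov06}. The classical cluster-tilting mutation theorem therefore yields $\dim_\F \Ext^1_{\cc_{\F Q}}(X\otimes_R \F,\, Y\otimes_R \F) = 1$. By Lemma~\ref{ext}, this $\F$-dimension equals the $R$-rank of the free $R$-module $\Ext^1_{\cc_{RQ}}(X,Y)$, which is therefore one.

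For the converse, suppose $\Ext^1_{\cc_{RQ}}(X,Y)$ has rank one. By Lemma~\ref{ext} again, $\Ext^1_{\cc_{\F Q}}(X\otimes_R \F,\, Y\otimes_R \F)$ has $\F$-dimension one, so the classical mutation theorem produces a cluster-tilting object $Y\otimes_R \F \oplus X'\otimes_R \F$ in $\cc_{\F Q}$. To deduce that $T'$ itself is cluster-tilting in $\cc_{RQ}$, it suffices to verify that $T'$ is rigid, i.e.\ that $\Ext^1_{\cc_{RQ}}(Y, X')$ and $\Ext^1_{\cc_{RQ}}(X', Y)$ vanish; applying Lemma~\ref{ext} to each of these pairs of rigid objects identifies the $R$-rank with the $\F$-dimension of the corresponding Ext group over $\F$, both of which vanish since $T'\otimes_R \F$ is rigid. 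The bijection of rigid indecomposable objects then ensures that the $n$ indecomposable summands of $T'$ remain pairwise non-isomorphic, so $T'$ is cluster-tilting.

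The main point, and the step I expect to require the most care, is the systematic use of Lemma~\ref{ext}: freeness of the $R$-modules $\Ext^1_{\cc_{RQ}}(-,-)$ on pairs of rigid objects, together with the rank-equals-dimension principle under base change, is precisely what allows every $\Ext$-statement needed for rigidity or mutation to be transported between $R$ and $\F$ without loss of information. Once this is in place, the classical mutation theorem can be invoked as a black box.
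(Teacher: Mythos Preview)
Your proposal is correct and follows essentially the same strategy as the paper: reduce to the field case via Lemma~\ref{ext} and Theorem~\ref{rigid1}, then invoke \cite[7.5]{BuanMarshReinekeReitenTodorov06}. The only difference is cosmetic: in the converse direction the paper simply cites the bijection between cluster-tilting objects over $R$ and over $\F$ (stated just before Lemma~\ref{ext}) to conclude that $T'$ is cluster-tilting once $T'\otimes_R\F$ is, whereas you unpack this step by re-verifying rigidity of $T'$ directly via Lemma~\ref{ext}; both are valid and amount to the same content.
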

\begin{proof}
By \ref{ext} we have $\Ext^1_{\cc_{RQ}} (X, Y) \otimes_R \F \cong
\Ext^1_{\cc_{\F Q}}(X \otimes_R \F, Y \otimes_R \F)$. Furthermore
both objects $\F \otimes_R X$ and $\F  \otimes_R Y$ are rigid and
indecomposable. Clearly $\F \otimes_R T \cong \F \otimes_R X\oplus
\F \otimes X'$ is a cluster tilting object in $\cc_{\F Q}$. Thus,
by \cite[7.5]{BuanMarshReinekeReitenTodorov06}, the object $\F
\otimes_R T'$ is cluster tilting if and only if the extension
group $\Ext^1_{\cc_{\F Q}}(X \otimes_R \F, Y \otimes_R \F)$ is one
dimensional. As the functor $\F \otimes_R ?$ induces a bijection
between rigid indecomposable objects in $\cc_{RQ}$ and $\cc_{\F
Q}$, the object $T'$ is cluster-tilting if and only if 
$\Ext^1_{\mathcal{C}_{R Q}}(X, Y)$ has rank one.
\end{proof}
Let $X$ and $Y $ be rigid indecomposable objects with an extension
space of rank one. By the preceding theorem and the results of
\cite{BuanMarshReinekeReitenTodorov06}, we obtain that
there is a rigid object $X'$ such that
$Y\oplus X'$ and $X\oplus X'$ are cluster-tilting objects. 
Let us choose generators $\eps$ and $\eps'$ of the 
rank one modules $\Ext^1_{\mathcal{C}_{R Q}}(X, Y)$ and
$\Ext^1_{\mathcal{C}_{R Q}}(Y,X)$. We construct
non split triangles
\[ 
Y\stackrel{f} \to E \to X \stackrel{\eps} \to \Sigma Y \mbox{   and  } 
X \to E' \stackrel{g} \to Y \stackrel{\eps'} \to \Sigma X.
\]
By Lemma
\ref{ext} these triangles are mapped by the functor $\F\otimes_R ?
$ to non-split triangles in $\mathcal{C}_{\F Q}$. By
\cite[6.4]{BuanMarshReinekeReitenTodorov06} the maps $\F\otimes_R
f$ and $\F\otimes_R g$ are minimal $\add (\F \otimes_R
X')$-approximations. 
We call the triangles in the integral cluster
category the {\em exchange triangles} of the mutation. By
\cite{CalderoKeller06} they categorify the exchange relations in
the cluster algebra associated to the quiver $Q$.

It was shown in \cite{BuanMarshReinekeReitenTodorov06}
\cite{HappelUnger05a}, cf. also \cite{Hubery10}, that all cluster-tilting objects 
of $\cc_{\F Q}$ are related by iterated mutation. Clearly, as cluster-tilting objects and their mutations in $\cc_{RQ}$ are in bijection with cluster-tilting objects in $\cc_{\F Q}$ and their mutations, we obtain the following result.
\begin{cor}
The cluster tilting objects in $\mathcal{C}_{R Q}$ are all
connected via cluster-tilting mutation and can therefore be obtained by iterated
mutation from the initial object~$\Gamma$.
\end{cor}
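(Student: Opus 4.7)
The plan is to transport the known field-case result across the reduction functor $?\otimes_R \F$. First, I would note that by Theorem~\ref{rigid1}, this functor induces a bijection between isomorphism classes of rigid indecomposable objects of $\cc_{RQ}$ and of $\cc_{\F Q}$. Because cluster-tilting objects are precisely sums of $n$ pairwise non-isomorphic rigid indecomposables (and cluster-tilting property can be tested by a rank-one extension condition, invariant under $?\otimes_R \F$ by Lemma~\ref{ext} together with the mutation theorem proved just above), this bijection lifts to a bijection between cluster-tilting objects in the two categories. Moreover, this lifted bijection clearly preserves direct-sum decompositions, so two cluster-tilting objects in $\cc_{RQ}$ share exactly $n-1$ common indecomposable summands if and only if their images in $\cc_{\F Q}$ do; in other words, mutation pairs correspond to mutation pairs.

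Next, I would invoke the result of Buan--Marsh--Reineke--Reiten--Todorov (and Happel--Unger, Hubery) cited in the text, which asserts that any two cluster-tilting objects of $\cc_{\F Q}$ are related by a finite sequence of mutations, and in particular every cluster-tilting object of $\cc_{\F Q}$ is obtained from $\F Q$ (which corresponds to the image of $\Gamma$) by iterated mutation. Given a cluster-tilting object $T$ of $\cc_{RQ}$, I would apply this to $\F \otimes_R T$ to obtain a sequence of mutations
\[
\F Q = T_0' \leftrightarrow T_1' \leftrightarrow \cdots \leftrightarrow T_k' = \F \otimes_R T
\]
in $\cc_{\F Q}$, then lift this sequence termwise through the bijection to a sequence $\Gamma = T_0 \leftrightarrow T_1 \leftrightarrow \cdots \leftrightarrow T_k = T$ of cluster-tilting objects in $\cc_{RQ}$. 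Each step $T_i \leftrightarrow T_{i+1}$ is a mutation pair in $\cc_{RQ}$ by the preservation property established above. This exhibits $T$ as obtained from $\Gamma$ by iterated mutation, proving the corollary.

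The only potentially delicate point is making sure the bijection genuinely identifies mutation pairs in both directions: the forward direction is automatic from preservation of direct-sum decompositions, while the reverse direction uses that a mutation pair $(T,T')$ in $\cc_{\F Q}$ can be lifted uniquely (up to isomorphism) to cluster-tilting objects in $\cc_{RQ}$ sharing $n-1$ common summands, which is exactly what the bijection on rigid indecomposables gives. This is the step where one must be careful, but it follows directly from the lattice-likeness of rigid indecomposables and the unique decomposition property noted after Theorem~\ref{rigid1}.
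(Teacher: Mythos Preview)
Your proposal is correct and follows essentially the same approach as the paper: the paper's proof is in fact just the single sentence preceding the corollary, which invokes the field-case connectivity result of \cite{BuanMarshReinekeReitenTodorov06}, \cite{HappelUnger05a}, \cite{Hubery10} and transports it along the bijection between cluster-tilting objects (and mutation pairs) of $\cc_{RQ}$ and $\cc_{\F Q}$ already established after the Definition. You have simply spelled out the lifting argument in more detail than the paper does.
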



\begin{thebibliography}{10}

\bibitem{Amiot09}
Claire Amiot, \emph{Cluster categories for algebras of global dimension $2$ and
  quivers with potential}, Annales de l'institut {F}ourier \textbf{59} (2009),
  no.~6, 2525--2590.

\bibitem{BuanMarsh06}
Aslak Bakke~Buan and Robert Marsh, \emph{Cluster-tilting theory}, Trends in
  representation theory of algebras and related topics, Contemp. Math., vol.
  406, Amer. Math. Soc., Providence, RI, 2006, pp.~1--30.

\bibitem{BuanMarshReinekeReitenTodorov06}
Aslak Bakke~Buan, Robert~J. Marsh, Markus Reineke, Idun Reiten, and Gordana
  Todorov, \emph{Tilting theory and cluster combinatorics}, Advances in
  Mathematics \textbf{204 (2)} (2006), 572--618.

\bibitem{BerensteinZelevinsky05}
Arkady Berenstein and Andrei Zelevinsky, \emph{Quantum cluster algebras}, Adv.
  Math. \textbf{195} (2005), no.~2, 405--455.

\bibitem{CalderoChapotonSchiffler06}
Philippe Caldero, Fr\'ed\'eric Chapoton, and Ralf Schiffler, \emph{Quivers with
  relations arising from clusters (${A}_n$ case)}, Trans. Amer. Math. Soc.
  \textbf{358} (2006), no.~3, 1347--1364.

\bibitem{CalderoKeller06}
Philippe Caldero and Bernhard Keller, \emph{From triangulated categories to
  cluster algebras. {II}}, Ann. Sci. \'Ecole Norm. Sup. (4) \textbf{39} (2006),
  no.~6, 983--1009.

\bibitem{CrawleyBoevey96}
William Crawley-Boevey, \emph{Rigid integral representations of quivers},
  Representation theory of algebras ({C}ocoyoc, 1994), CMS Conf. Proc.,
  vol.~18, Amer. Math. Soc., Providence, RI, 1996, pp.~155--163.

\bibitem{FockGoncharov09}
Vladimir~V. Fock and Alexander~B. Goncharov, \emph{Cluster ensembles,
  quantization and the dilogarithm}, Annales scientifiques de l'ENS \textbf{42}
  (2009), no.~6, 865--930.

\bibitem{FominZelevinsky02}
Sergey Fomin and Andrei Zelevinsky, \emph{Cluster algebras. {I}.
  {F}oundations}, J. Amer. Math. Soc. \textbf{15} (2002), no.~2, 497--529
  (electronic).

\bibitem{Ginzburg06}
Victor Ginzburg, \emph{{Calabi-Yau} algebras}, arXiv:math/0612139v3 [math.AG].

\bibitem{HappelUnger05a}
Dieter Happel and Luise Unger, \emph{On the set of tilting objects in
  hereditary categories}, Representations of algebras and related topics,
  Fields Inst. Commun., vol.~45, Amer. Math. Soc., Providence, RI, 2005,
  pp.~141--159.

\bibitem{Hubery10}
Andrew Hubery, \emph{The cluster complex of an hereditary artin algebra},
  Algebras and Representation Theory (2010), 1--23, 10.1007/s10468-010-9229-3.

\bibitem{Keller09b}
Bernhard Keller, \emph{Alg\`ebres amass\'ees et applications}, S\'eminaire
  Bourbaki, Expos\'e 1014, arXiv:0911.2903 [math.RA].

\bibitem{Keller05}
\bysame, \emph{{On triangulated orbit categories}}, Doc. Math. \textbf{10}
  (2005), 551--581.

\bibitem{Keller10b}
\bysame, \emph{Cluster algebras, quiver representations and triangulated
  categories}, Triangulated categories (Thorsten Holm, Peter J{\o}rgensen, and
  Rapha\"el Rouquier, eds.), London Mathematical Society Lecture Note Series,
  vol. 375, Cambridge University Press, 2010, pp.~76--160.

\bibitem{Keller11b}
\bysame, \emph{Deformed {C}alabi--{Y}au completions}, Journal f{\"u}r die reine
  und angewandte Mathematik (Crelles Journal) \textbf{654} (2011), 125--180,
  with an appendix by Michel~Van den Bergh.

\bibitem{KellerReiten08}
Bernhard Keller and Idun Reiten, \emph{Acyclic {C}alabi-{Y}au categories},
  Compos. Math. \textbf{144} (2008), no.~5, 1332--1348, With an appendix by
  Michel Van den Bergh.

\bibitem{KellerYang11}
Bernhard Keller and Dong Yang, \emph{Derived equivalences from mutations of
  quivers with potential}, Advances in Mathematics \textbf{26} (2011),
  2118--2168.

\bibitem{Qin10}
Fan Qin, \emph{Quantum cluster variables via {S}erre polynomials},
  math.RT/1004.4171.

\bibitem{Reiten06}
Idun Reiten, \emph{Tilting theory and cluster algebras}, preprint available at
  www.institut.math.jussieu.fr/ $\widetilde{\mbox{ }}$
  keller/ictp2006/lecturenotes/reiten.pdf.

\bibitem{Ringel07}
Claus~Michael Ringel, \emph{Some remarks concerning tilting modules and tilted
  algebras. {Origin. Relevance. Future.}}, Handbook of Tilting Theory, LMS
  Lecture Note Series, vol. 332, Cambridge Univ. Press, Cambridge, 2007,
  pp.~413--472.

\bibitem{Rupel10a}
Dylan Rupel, \emph{On a quantum analog of the {C}aldero-{C}hapoton formula},
  International Mathematics Research Notices (2010), 1--30,
  10.1093/imrn/rnq192.

\end{thebibliography}
\def\cprime{$'$} \def\cprime{$'$}
\providecommand{\bysame}{\leavevmode\hbox to3em{\hrulefill}\thinspace}
\providecommand{\MR}{\relax\ifhmode\unskip\space\fi MR }
\providecommand{\MRhref}[2]{%
  \href{http://www.ams.org/mathscinet-getitem?mr=#1}{#2}
}
\providecommand{\href}[2]{#2}

\end{document}